\newtheorem{thm}{Theorem}
\newtheorem{lemma}[thm]{Lemma}
\newtheorem{theorem}[thm]{Theorem}
\newtheorem{cor}[thm]{Corollary}
\newtheorem{rem}[thm]{Remark}
\numberwithin{equation}{section}
\numberwithin{thm}{section}
\def\cD{{\mathcal D}}
\def\cF{{\mathcal F}}
\def\cG{{\mathcal G}}
\def\cH{{\mathcal H}}
\def\cI{{\mathcal I}}
\def\cJ{{\mathcal J}}
\def\cL{{\mathcal L}}
\def\cP{{\mathcal P}}
\def\cT{{\mathcal T}}
\def\cU{{\mathcal U}}
\def\cV{{\mathcal V}}
\def\cX{{\mathcal X}}
\def\A{{\mathbb A}}
\def\F{{\mathbb F}}
\def\K{{\mathbb K}}
\def\P{{\mathbb P}}
\def\Q{{\mathbb Q}}
\def\Z{{\mathbb Z}}
\def\ep{{\mathbf{e}}_p}
\def\e{{\mathbf{e}}}
\def\eV{{\mathbf{e}}_V}
\def\ssum{\mathop{\sum\, \sum}}
\def\\{\cr}
\def\({\left(}
\def\){\right)}
\def\[{\left[}
\def\]{\right]}
\def\<{\langle}
\def\>{\rangle}
\def\fl#1{\left\lfloor#1\right\rfloor}
\def\rf#1{\left\lceil#1\right\rceil}
\def\mand{\qquad\mbox{and}\qquad}
\def\GL{\operatorname{GL}}
\def\sym{\operatorname{sym}}
\def\Sym{\operatorname{Sym}}
\def\ind{\operatorname{ind}}
\def\ord{\operatorname{ord}}
\begin{document}

\title[The Sato--Tate Conjecture In Thin Families of Curves]{The Sato--Tate Distribution in Thin Parametric Families of Elliptic Curves}

\author[R. de la Bret\`eche]{R\'egis de la Bret\`eche}
\address{Institut de Math\'ematiques de Jussieu-PRG, Universit\'e Paris Diderot,
Sorbonne Paris Cit\'e, UMR 7586, Case 7012, F-75013 Paris, France}
\email{regis.de-la-breteche@imj-prg.fr}

\author[M. Sha]{Min Sha}
\address{School of Mathematics and Statistics, University of New South Wales,
 Sydney, NSW 2052, Australia}
\email{shamin2010@gmail.com}

\author[I. E. Shparlinski]{Igor E. Shparlinski}
\address{School of Mathematics and Statistics, University of New South Wales,
 Sydney, NSW 2052, Australia}
\email{igor.shparlinski@unsw.edu.au}

\author[J. F. Voloch]{Jos{\'e} Felipe Voloch}
\address{Department of Mathematics, University of Texas, Austin, TX~78712, USA}
\email{voloch@math.utexas.edu}

\subjclass[2010]{11G05, 11G20,  14H52}
\keywords{Sato--Tate conjecture, parametric families
of elliptic curves}
\date{}

\begin{abstract}
We obtain new results concerning the  Sato--Tate conjecture on
the distribution of Frobenius traces
over single and double parametric families of elliptic curves.
We consider these curves  for values of parameters
having prescribed arithmetic structure: product sets, geometric
progressions, and  most significantly prime numbers.  In particular,
some families are much thinner than the ones previously studied.
\end{abstract}

\maketitle

%*************************************************************
\section{Introduction}
%*************************************************************
\label{sec:intro}

\subsection{Background and motivation}

For polynomials $f(Z), g(Z) \in \Z[Z]$ satisfying
\begin{equation}
\label{eq:Nondeg}
\Delta(Z) \neq 0 \mand j(Z)  \not\in \Q,
\end{equation}
where
$$
%\label{eq:Def D j}
\Delta(Z)   = -16 (4f(Z)^3 + 27g(Z)^2)\quad\text{and}\quad
j(Z)    = \frac{-1728(4f(Z))^{3}}{\Delta(Z)}
$$
are the {\it discriminant\/} and {\it $j$-invariant\/}, respectively,
we consider
the   elliptic curve
\begin{equation}
\label{eq:Family AB}
E(Z) : \quad Y^2 = X^3 + f(Z)X + g(Z)
\end{equation}
over the function field $\Q(Z)$; see~\cite{Silv} for a general background on elliptic curves.
In particular,  we refer
to~\cite{Silv}  for the notions of the {\it conductor\/} $N_E$ of an elliptic curve $E$
and {\it CM-curves\/}.

There exists an extensive literature on investigating the properties of
the specialisations $E(t)$ modulo consecutive primes $p\le x$
for a growing parameter $x$ and for  the parameter $t$ that
runs through some interesting sets $\cT$ of integer or rational
numbers,  see~\cite{ShaShp} for a survey and some recent results;
a short outline is also given in Section~\ref{sec:prev res}.

More precisely, given an elliptic curve $E$ over $\Q$ we   denote by $E_p$
the reduction of $E$ modulo $p$. In particular, we use
$E_p(\F_p)$ to denote the group of $\F_p$-rational
points on $E_p$, where $\F_p$ is the finite field of
$p$ elements.  We also define, as usual,
the {\it Frobenius trace\/} $a_p(E)=p+1-\#E_p(\F_p)$.

There are several possible scenarios in the study of the curves from the
family~\eqref{eq:Family AB} (or similar family~\eqref{eq:Family uv}) and their reductions:
\begin{itemize}
\item One can fix a curve and vary   the prime $p$. This is usually called the
{\it horizontal\/} aspect (and is typically very hard to study).

\item One can fix  a prime $p$ and consider the curves $E_p(t)$ for all values of the parameter $t$
from some ``interesting'' set $\cT$.  This is usually called the
{\it vertical\/} aspect.

\item One can vary both the prime $p$ and  the curves $E_p(t)$ for $t\in \cT$,
 we call this the {\it mixed\/} aspect.

\end{itemize}

Clearly the mixed aspect combines both horizontal and vertical aspects
and often leads to results which are not possible within either of them.

Recall that by the Hasse bound (see~\cite{Silv}), we can define the {\it Frobenius angle\/} $\psi_p(E) \in [0, \pi]$ via the
identity
\begin{equation}
\label{eq:ST angle}
\cos \psi_p(E) = \frac{a_p(E)}{2\sqrt{p}}.
\end{equation}
Then, in general terms, the \textit{Sato--Tate  conjecture} predicts that the distribution of the angles $\psi_p(E)$ is governed by the
 {\it Sato--Tate  density\/}
\begin{equation}
\label{eq:ST dens}
\mu_{\tt ST}(\alpha,\beta) = \frac{2}{\pi}\int_\alpha^\beta
\sin^2\vartheta\, {\rm d} \vartheta = \frac{2}{\pi}\int_{\cos \beta}^{\cos \alpha}
(1-z^2)^{1/2}\, {\rm d} z,
\end{equation}
where $[\alpha,\beta] \subseteq [0,\pi]$.

In the   vertical aspect when $p$ is fixed and $E$ is chosen at random
from the set of all elliptic curves over $\F_p$, this has been shown by Birch~\cite{Birch}.

 The  horizontal aspects is much harder and the Sato--Tate conjecture
has  been settled only quite recently  in the series of works of Barnet-Lamb,  Geraghty,
Harris and Taylor~\cite{B-LGHT},
Clozel,  Harris and Taylor~\cite{CHT},
Harris,  Shepherd-Barron and  Taylor~\cite{HS-BT}, and Taylor~\cite{Taylor2008}.
In particular, given a non-CM elliptic curve $E$ of conductor $N_E$,  for the number $\pi_{E}(\alpha,\beta;x)$ of
primes $p \le x$ (with $p \nmid N_E$) for which
$\psi_p(E) \in[\alpha, \beta] \subseteq [0,\pi]$, we have
$$
\pi_{E}(\alpha,\beta;x) \sim
\mu_{\tt ST}(\alpha,\beta) \cdot\frac{x}{\log x}
$$
as $x \to \infty$. However, this asymptotic formula lacks an explicit error term.

Here, we are mostly interested in the vertical and mixed aspects however extended only to
some families of curves, such as~\eqref{eq:Family AB} specialised for
parameters $t$ from a sparse set $\cT$, rather than for all curves over $\F_p$
as in~\cite{Birch}.
We  show for several such families of curves and sets $\cT$ that  the Frobenius angles
are also distributed according to the Sato--Tate  density. Several results of this type are
already
known, however mostly for sets $\cT$ of integers having some {\it additive structure\/} such as intervals
of consecutive integers or sumsets, see~\cite{ShaShp,Shp3}. Here, thanks to
Lemma~\ref{lem:Mich bound} we consider a new
class of sets $\cT$ which are defined by some {\it multiplicative conditions\/} such as
primes or multiplicative subgroups of $\F_p^*$. In particular, such sets tend to be
much sparser than the sets studied in previous works.

As an application of these results, we also consider the mixed situation when both the curve
and the prime vary.

Similar questions have also been studied for some other families such as~\eqref{eq:Family uv}
below,
with $u$ and $v$ in some subsets of $\F_p$, see Section~\ref{sec:prev res} for
more details.

\subsection{Previous  results}
\label{sec:prev res}

The idea of studying the properties of reduction $E_p$ for $p\le x$
on average  over a family of elliptic curves $E$ is due to Fouvry and  Murty~\cite{FoMu},
who have considered the frequency of vanishing $a_p(E_{u,v}) = 0$
for  the family of curves
\begin{equation}
\label{eq:Family uv}
E_{u,v}:\ Y^2 = X^3 + uX + v,
\end{equation}
where the integers $u$ and $v$ satisfy the inequalities
$|u| \le U$, $|v|\le V$. The result  of~\cite{FoMu} has been extended
to other values of $a_p(E_{u,v})$ by
David and Pappalardi~\cite{DavPapp} and
Baier~\cite{Baier1}, see also~\cite{Baier2}. This corresponds to
the \textit{Lang--Trotter conjecture}, see~\cite{Lang}, on average
over a family of curves~\eqref{eq:Family uv}.

The above results and methods of~\cite{Baier1,Baier2,DavPapp,FoMu}
can also be used to establish the Sato--Tate conjecture on average
for the family~\eqref{eq:Family uv}, see also~\cite{BaZha,Shp3}.
However, Banks and Shparlinski~\cite{BaSh} have shown that using
a different approach, based on
bounds of multiplicative character sums and the large sieve
inequality (instead of the exponential sum technique employed in~\cite{FoMu}), one can establish the Sato--Tate conjecture
on average for the curves~\eqref{eq:Family uv}; see also \cite{Shp2}. 

Furthermore, Shparlinski~\cite{Shp3} has established the Sato--Tate conjecture on average
for more general families of the form $E_{f(u),g(v)}$
with  integers $|u| \le U$, $|v|\le V$. 
Recently,  Sha and Shparlinski~\cite{ShaShp} have established the Sato--Tate conjecture on average for the families of curves $E(u+v)$, where $u,v$ both run through some subsets of $\{1,2,\ldots,T\}$, or both run over the set $\cF(T)$:
$$
\cF(T) = \{u/v \in \Q~:~\gcd(u, v) = 1, \,1 \le
u,v \le T\}.
$$
Most recently, using Lemma~\ref{lem:Mich bound} Sha and Shparlinski~\cite{ShaShp2} 
have also established the Sato--Tate conjecture on average for the families of curves $E(uv)$, where $u,v$ both run over the set $\cF(T)$. 
Finally, Cojocaru and Hall~\cite{CojHal} have studied the family
of curves~\eqref{eq:Family AB} and obtained an upper bound on the
frequency of the event $a_p(E(t)) = a$ for a fixed integer $a$,
when the parameter $t$ runs through the set $\cF(T)$.
Cojocaru and Shparlinski~\cite{CojShp}
have improved~\cite[Theorem~1.4]{CojHal}, which then has been
further improved by Sha  and Shparlinski~\cite{ShaShp}.

\subsection{Distribution of Frobenius traces and ranks}
\label{sec:Frob Rank}

Our motivation also comes from  the so-called  \textit{explicit formulas}, which can be found
in the works of Mestre~\cite{Mest1,Mest2}
that link the behaviour of Frobenius traces on consecutive
primes (that is, the horizontal aspect)
and the rank of the corresponding elliptic curve. This link has been used by
Fouvry and Pomykala~\cite{FoPo},  Michel~\cite{Mich} and Silverman~\cite{Silv1}
to estimate
the average rank in some families of elliptic curves.
For example, Michel~\cite[Theorem~1.3]{Mich} and, in a stronger
form, Silverman~\cite[Theorem~0.1]{Silv1}
 give  explicit bounds on the average rank of the
curves $E(t)$ in the family~\eqref{eq:Family AB} with $t$ running through all integers
of the interval $[-T,T]$ with $\Delta(t) \ne 0$ as $T \to \infty$. This direction is
naturally related to the mixed aspect.
The results here also can be compared to the recent result of
Bhargava and Shankar \cite{BS1}~that the average rank of all elliptic curves over $\Q$ (when ordered by height) is bounded,   see also~\cite{Kow,Poon} for outlines  of several other related results.

Combining our estimates with the approaches of~\cite{FoPo,Mich,Silv1},
one may obtain upper bounds on average ranks of families of curves
with parameters  from sets of prescribed multiplicative structure, such as primes, geometric progressions, and product sets.

\subsection{General notation}
\label{sec:notation}
Here we use the Landau symbol $O$ and the Vinogradov symbol $\ll$. We recall that the assertions $A=O(B)$ and $A\ll B$ are both equivalent to the inequality $|A|\le cB$ with some absolute constant $c>0$.

Throughout the paper  the implied constants  may, where obvious, depend on the polynomials $f$ and $g$ in~\eqref{eq:Family AB} and the real positive parameter $\varepsilon$, and are absolute otherwise.
Occasionally they also depend on the integer parameter $\lambda$ which we indicate
as  $O_\lambda$ and $\ll_\lambda$.

As usual, $A=o(B)$ means that $A/B\to 0$ and $A \sim B$  means that $A/B\to 1$.

Furthermore,  the letters $\ell$ and $p$ always denote a prime number, and as usual, we use $\pi(x)$ to denote the number of primes $p\le x$.

We always assume that the elements of $\F_p$
are represented by the set $\{0, \ldots, p-1\}$ and thus
we switch freely between the equations in $\F_p$ and congruences
modulo $p$ (for example, compare the definitions of
$N_p(\alpha,\beta; \cG)$, $N_p(\alpha,\beta;  \cU,\cV)$  and $Q_p(\alpha,\beta; L)$
in Sections~\ref{sec: vert results} and~\ref{sec:mix results} below).

\section{Main Results}

\subsection{Our approach}
\label{sec:approach}

In this paper, we consider the  Sato--Tate conjecture on average for the polynomial family~\eqref{eq:Family AB} of elliptic curves when the variable $Z$
is specialised to a parameter $t$ from sets of prescribed
multiplicative structure, such as subgroups of $\F_p^*$, prime numbers, and geometric progressions.

We believe that these are the first known  results that involve
such sets of parameters.

To derive our results we introduce several new ideas, such as a version of
a result of Michel~\cite[Proposition~1.1]{Mich} with multiplicative characters
(see Lemma~\ref{lem:Mich bound}).
This is combined with a technique
of Niederreiter~\cite[Lemma~3]{Nied}.
To study the curves~\eqref{eq:Family AB} for specialisations
at consecutive primes, we also estimate some bilinear sums
(which maybe of independent interest) and combine this bound with  the Vaughan
identity~\cite{Vaughan,Vau}.

We are now able to give exact formulations
of our results. We always assume that the polynomials
$f$ and $g$ in~\eqref{eq:Family AB}
are fixed and so we do not include them in our notation.
We also often impose the following modulo $p$
analogue of the condition~\eqref{eq:Nondeg}:
\begin{equation}
\label{eq:Nondeg p}
\Delta(Z) \not \equiv  0 \pmod p \quad \text{and}\quad j(Z)~\text{is not constant modulo $p$}.
\end{equation}

\subsection{Our  results in the vertical aspect}
\label{sec: vert results}
Here, we fix an interval $[\alpha,\beta] \subseteq [0,\pi]$, we also fix an arbitrary prime $p$ for 
Theorems~\ref{thm:Subgroup}, \ref{thm:ProductSet}, \ref{thm:Prime1} and~\ref{thm:Prime2}.

Given a multiplicative subgroup $\cG\subseteq \F_p^*$, we
denote by $N_p(\alpha,\beta; \cG)$ the number of $w\in \cG$
for which $\Delta(w) \ne 0$ and  $\psi_p(E(w)) \in [\alpha,\beta]$.

\begin{thm}
\label{thm:Subgroup}
Suppose that the polynomials $f(Z), g(Z) \in \Z[Z]$
satisfy~\eqref{eq:Nondeg p}.
Then for any  subgroup
$\cG\subseteq \F_p^*$ of order $r$,  uniformly over
$[\alpha,\beta] \subseteq [0,\pi]$, we have
$$
N_p(\alpha,\beta; \cG)=\mu_{\tt ST}(\alpha,\beta)r   +O\(r^{1/2} p^{1/4}\).
$$
\end{thm}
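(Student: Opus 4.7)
The plan is to combine the standard Erd\H{o}s--Tur\'an--Niederreiter reduction of the Sato--Tate discrepancy to Chebyshev moment sums with a multiplicative character detection of membership in the subgroup $\cG$, and then to appeal to Lemma~\ref{lem:Mich bound}.

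First, I would expand the indicator function of $[\alpha,\beta]$ against the Sato--Tate density using the Chebyshev polynomials $U_n(\cos\psi)=\sin((n+1)\psi)/\sin\psi$, which are orthogonal for $\mu_{\tt ST}$. The Vinogradov trigonometric polynomial approximation, in the form of Niederreiter~\cite[Lemma~3]{Nied} referenced in \S\ref{sec:approach}, then yields for every integer $N\ge 1$
\[
N_p(\alpha,\beta;\cG)=\mu_{\tt ST}(\alpha,\beta)\,r+O\!\Bigl(\frac{r}{N}+\sum_{n=1}^N\frac{|S_n|}{n}\Bigr)+O(1),
\]
where
\[
S_n=\sum_{\substack{w\in\cG\\ \Delta(w)\ne 0}}U_n\bigl(\cos\psi_p(E(w))\bigr),
\]
and the $O(1)$ absorbs the at most $\deg\Delta$ values $w\in\cG$ with $\Delta(w)\equiv 0\pmod p$, a bounded set under~\eqref{eq:Nondeg p}.

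Next, to exploit the subgroup structure, I would detect membership in $\cG$ via the $d:=(p-1)/r$ multiplicative characters $\chi$ of $\F_p^*$ that are trivial on $\cG$, writing
\[
S_n=\frac{1}{d}\sum_{\chi\mid_\cG=1}T_n(\chi),\qquad T_n(\chi)=\sum_{\substack{w\in\F_p^*\\ \Delta(w)\ne 0}}\chi(w)\,U_n\bigl(\cos\psi_p(E(w))\bigr).
\]
For the principal character, $T_n(\chi_0)$ is a moment sum of Frobenius traces on $\Sym^n$ over the full affine specialisation and is bounded by $O(np^{1/2})$ via Deligne's estimates for the universal family (equivalently, Birch's theorem in its effective form). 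For each non-principal $\chi$ with $\chi\mid_\cG=1$, Lemma~\ref{lem:Mich bound} supplies the same bound $|T_n(\chi)|\ll np^{1/2}$. Averaging over the $d$ characters preserves $|S_n|\ll np^{1/2}$ uniformly in $n$. Substituting back, $\sum_{n=1}^N|S_n|/n\ll Np^{1/2}$, so the total error is $\ll r/N+Np^{1/2}$; balancing via $N=\lceil r^{1/2}p^{-1/4}\rceil$ produces the claimed $O(r^{1/2}p^{1/4})$ bound. The range $r<p^{1/2}$ is handled trivially, since the bound $N_p(\alpha,\beta;\cG)\le r<r^{1/2}p^{1/4}$ already suffices.

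The principal obstacle is the multiplicative character estimate encoded by Lemma~\ref{lem:Mich bound}: this is the non-standard ingredient of the proof, playing the role of Michel's additive character bound~\cite[Proposition~1.1]{Mich} in a thinner multiplicative setting. The rest is a routine Erd\H{o}s--Tur\'an--Niederreiter assembly, the true novelty being that subgroup membership is detected \emph{multiplicatively}, in contrast with the additive Fourier detections employed in~\cite{ShaShp,Shp3}.
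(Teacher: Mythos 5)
Your proof is correct and follows essentially the same route as the paper's: the Niederreiter reduction you carry out inline is exactly the content of the paper's Corollary~\ref{cor:ST Discrep}, and your explicit detection of $\cG$ by the $d=(p-1)/r$ multiplicative characters trivial on $\cG$, followed by an application of Lemma~\ref{lem:Mich bound}, is precisely what the paper packages as Lemma~\ref{lem:Subgr} (where the same cancellation reduces the average over all $p-1$ characters to those $d$ characters). The only cosmetic discrepancy is your unnecessary separate remark about $T_n(\chi_0)$; the principal character is already covered by Lemma~\ref{lem:Mich bound}.
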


We remark that noticing the trivial bound $N_p(\alpha,\beta; \cG)\le r$,  the result in Theorem~\ref{thm:Subgroup} is non-trivial when $r>r^{1/2} p^{1/4}$,
namely $r>\sqrt{p}$.

Similarly, given  two sets
$\cU, \cV  \subseteq \F_p^*$, we
denote by $N_p(\alpha,\beta;  \cU,\cV)$ the number of
$(u,v) \in \cU \times\cV$
for which $\Delta(uv) \ne 0$ and  $\psi_p(E(uv)) \in [\alpha,\beta]$.

\begin{thm}
\label{thm:ProductSet}
Suppose that the polynomials $f(Z), g(Z) \in \Z[Z]$
satisfy~\eqref{eq:Nondeg p}.
Then for any non-empty subsets
$\cU, \cV  \subseteq \F_p^*$,   uniformly over
$[\alpha,\beta] \subseteq [0,\pi]$, we have
$$
N_p(\alpha,\beta;  \cU,\cV)=\mu_{\tt ST}(\alpha,\beta) \# \cU  \# \cV  +O\( \(\# \cU  \# \cV\)^{3/4} p^{1/4}\).
$$
\end{thm}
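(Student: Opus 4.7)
The plan is to reduce the equidistribution claim to an estimate for Weyl sums against the Sato--Tate characters $X_\lambda(\theta)=\sin((\lambda+1)\theta)/\sin\theta$, and then to exploit the multiplicative product structure of $\cU\times\cV$ via orthogonality of multiplicative characters modulo $p$, with the twisted Michel-type estimate of Lemma~\ref{lem:Mich bound} as the main arithmetic input.

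First, by an Erd\H{o}s--Tur\'an--type inequality adapted to the Sato--Tate measure (in the form of~\cite[Lemma~3]{Nied} invoked in Section~\ref{sec:approach}), for every integer $L\ge 1$ we have
$$
\left|N_p(\alpha,\beta;\cU,\cV)-\mu_{\tt ST}(\alpha,\beta)\#\cU\,\#\cV\right|
\ll \frac{\#\cU\,\#\cV}{L}+\sum_{\lambda=1}^{L}\frac{|S_\lambda|}{\lambda},
$$
where $S_\lambda=\sum_{(u,v)\in\cU\times\cV}X_\lambda(\psi_p(E(uv)))$ is extended over pairs with $\Delta(uv)\not\equiv 0\pmod p$; the excluded pairs are $O((\#\cU\,\#\cV)^{1/2})$ in number and contribute less than the target error term.

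To bound $S_\lambda$, I would expand the indicator functions of $\cU$ and $\cV$ in multiplicative characters. Setting $A(\chi)=\sum_{u\in\cU}\chi(u)$, $B(\chi)=\sum_{v\in\cV}\chi(v)$, and
$$
T_\lambda(\chi)=\sum_{\substack{t\in\F_p^{*}\\ \Delta(t)\not\equiv 0}} \chi(t)\,X_\lambda(\psi_p(E(t))),
$$
multiplicative orthogonality yields
$$
S_\lambda=\frac{1}{p-1}\sum_{\chi}A(\chi)B(\chi)T_\lambda(\overline{\chi}).
$$
Lemma~\ref{lem:Mich bound} then supplies $|T_\lambda(\chi)|\ll_\lambda p^{1/2}$, uniformly in $\chi$, under~\eqref{eq:Nondeg p}. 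Combining with Cauchy--Schwarz and the Parseval identities $\sum_\chi|A(\chi)|^2=(p-1)\#\cU$ and $\sum_\chi|B(\chi)|^2=(p-1)\#\cV$, I obtain
$$
\sum_\chi|A(\chi)B(\chi)|\le (p-1)\(\#\cU\,\#\cV\)^{1/2},
$$
hence $|S_\lambda|\ll_\lambda p^{1/2}\(\#\cU\,\#\cV\)^{1/2}$.

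Inserting this into the Erd\H{o}s--Tur\'an bound and balancing the two contributions with $L\asymp \(\#\cU\,\#\cV\)^{1/4}p^{-1/4}$ (the complementary range $\#\cU\,\#\cV\le p$ being trivial since then the claimed error already dominates the main term) delivers the stated bound $O\((\#\cU\,\#\cV)^{3/4}p^{1/4}\)$. The main obstacle is entirely the arithmetic content of Lemma~\ref{lem:Mich bound}; the rest is standard character orthogonality, Parseval, and an optimization in $L$. A minor point is that the dependence on $\lambda$ in the twisted Michel bound must be at worst polynomial so that summing $\lambda\le L$ does not inflate the error, but this is provided by the interpretation of $T_\lambda(\chi)$ as a trace function together with the Riemann hypothesis for curves over finite fields underlying the proof of Lemma~\ref{lem:Mich bound}.
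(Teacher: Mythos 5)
Your proof is correct and takes essentially the same route as the paper: the paper's Lemma~\ref{lem:Bilin} packages precisely your multiplicative-character expansion, twisted Michel bound (Lemma~\ref{lem:Mich bound}), and Cauchy--Schwarz/Parseval steps, while Corollary~\ref{cor:ST Discrep} packages your optimization of the Niederreiter bound in $L$. The only cosmetic difference is that the paper treats the excluded pairs with $\Delta(uv)\equiv 0\pmod p$ via the inequality $\#\cU\#\cV-\min\{\#\cU,\#\cV\}\deg\Delta\le\#\cH\le\#\cU\#\cV$, which is the same estimate you used.
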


Note that the result in Theorem~\ref{thm:ProductSet} is non-trivial when $\# \cU  \# \cV > p$.

Furthermore, given an integer $L$,  we
denote by $Q_p(\alpha,\beta; L)$ the number of primes $\ell \le L$
for which $\Delta(\ell) \not \equiv 0 \pmod p$ and
$\psi_p(E(\ell)) \in [\alpha,\beta]$.

First we record the following result whose proof rests on the recent work of Fouvry, Kowalski and
Michel~\cite{FKM}. 

\begin{thm}
\label{thm:Prime1}
Suppose that the 
 polynomials $f(Z), g(Z) \in \Z[Z]$ 
satisfy~\eqref{eq:Nondeg p}. Then, for any $\varepsilon > 0$,  there exists some  $\rho>0$  such that  
 for any  integer $L \ge p^{3/4 + \varepsilon}$, uniformly over  $[\alpha,\beta] \subseteq [0,\pi]$ 
we have
$$
 Q_p(\alpha,\beta; L)  =\( \mu_{\tt ST}(\alpha,\beta)  + O\(p^{-\rho}\)\) \pi(L) .
$$
\end{thm}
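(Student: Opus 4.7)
The plan is to combine a Weyl--Erdős--Turán discrepancy inequality for the Sato--Tate measure with the sum-over-primes estimates for algebraic trace functions due to Fouvry, Kowalski and Michel~\cite{FKM}. Since the Chebyshev polynomials of the second kind $U_m$ form an orthonormal basis of $L^2([0,\pi],\mu_{\tt ST})$, a standard Erdős--Turán--Koksma inequality yields, for any integer $M\ge 1$,
$$
\left|Q_p(\alpha,\beta;L)-\mu_{\tt ST}(\alpha,\beta)\pi(L)\right| \ll \frac{\pi(L)}{M} + \sum_{m=1}^M \frac{|S_m|}{m},
$$
where
$$
S_m=\sum_{\substack{\ell\le L\\ \Delta(\ell)\not\equiv 0\pmod p}} U_m\left(\cos\psi_p(E(\ell))\right).
$$
Writing the Frobenius eigenvalues of $E(t)$ as $p^{1/2}e^{\pm i\psi_p(E(t))}$, one identifies
$$
K_m(t) := U_m\left(\cos\psi_p(E(t))\right) = p^{-m/2}\,\tr\left(\Sym^m \Frob_{p,t}\right)
$$
as the weight-$0$ trace function on $\F_p$ attached to the $m$-th symmetric power of the rank-two $\ell$-adic sheaf $R^1\pi_*\Q_\ell$ arising from the family~\eqref{eq:Family AB} over the complement of the zero locus of $\Delta$.

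Next I would invoke the FKM theorem bounding sums of isotypic trace functions over primes. This requires verifying that for every $m\ge 1$ the sheaf $\Sym^m R^1\pi_*\Q_\ell$ is geometrically irreducible, has conductor bounded uniformly in $p$ in terms of $\deg f$, $\deg g$ and $m$, and is not geometrically isomorphic to an Artin--Schreier sheaf. The non-degeneracy hypothesis~\eqref{eq:Nondeg p}, in particular the non-constancy of $j(Z)\bmod p$, forces the geometric monodromy group of $R^1\pi_*\Q_\ell$ to be $\SL_2$, and the required properties of each $\Sym^m$ then follow. Under these conditions the FKM estimate yields, for $L\ge p^{3/4+\varepsilon}$ and each fixed $m$,
$$
S_m \ll \pi(L)\, p^{-\eta_m}
$$
with $\eta_m=\eta_m(\varepsilon,m)>0$. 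Their argument proceeds through Vaughan's identity, reducing $S_m$ to Type~I and Type~II bilinear sums of $K_m$; the Type~II sum is handled by opening the square and applying a Deligne-type correlation bound for $K_m$ against its multiplicative shifts, while the Type~I sum is treated by completion plus the Weil bound. The exponent $3/4$ is exactly the threshold imposed by the Vaughan decomposition together with the square-root cancellation in these bilinear sums.

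Finally, choose $M=M_0(\varepsilon)$ a sufficiently large absolute constant and set $\rho=\tfrac12\min_{1\le m\le M_0}\eta_m$; substituting into the Erdős--Turán inequality gives $Q_p(\alpha,\beta;L)=(\mu_{\tt ST}(\alpha,\beta)+O(p^{-\rho}))\pi(L)$ uniformly in $[\alpha,\beta]\subseteq[0,\pi]$. The main obstacle is the algebraic-geometry input rather than the analytic one: one must check that the symmetric-power sheaves of this one-parameter family of elliptic curves satisfy the precise geometric hypotheses required by FKM \emph{uniformly in the reduction prime $p$}, with explicit conductor bounds depending only on $f$, $g$ and $m$. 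These are classical Katz-style monodromy statements, and the non-isotriviality guaranteed by~\eqref{eq:Nondeg p} is exactly what makes them go through.
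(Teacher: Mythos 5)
Your overall strategy is the paper's: a Niederreiter/Erd\H{o}s--Tur\'an discrepancy inequality built from Chebyshev polynomials of the second kind, combined with the Fouvry--Kowalski--Michel estimate for trace functions summed over primes, applied to the sheaves $\Sym^m(R^1\pi_!\Q_\ell(1/2))$ whose monodromy and conductor are controlled exactly as you describe.

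However, the final optimization step has a genuine gap. You take the cutoff $M=M_0(\varepsilon)$ to be a \emph{fixed absolute constant}. Substituting into
$$
\left|Q_p(\alpha,\beta;L)-\mu_{\tt ST}(\alpha,\beta)\pi(L)\right| \ll \frac{\pi(L)}{M} + \sum_{m=1}^M \frac{|S_m|}{m}
$$
then leaves the leading term $\pi(L)/M_0$, which is a fixed \emph{constant fraction} of $\pi(L)$ independently of $p$; it is not $O(\pi(L)\,p^{-\rho})$ for any $\rho>0$. To get a power saving you must let $M$ grow with $p$, and for that you must track how the FKM bound degrades with $m$. The key point --- and what the paper's Corollary~\ref{cor:ST Discrep} packages --- is that the saving exponent in FKM is \emph{uniform in $m$} (it depends only on $\varepsilon$), while the dependence on $m$ enters only through the conductor, which is linear in $m$, so that $|S_m|\ll m^A\,\pi(L)\,(1+p/L)^{1/12}p^{-\eta}$ with fixed $\eta>0$ and $A$ independent of $m$. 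Balancing $\pi(L)/M$ against $\sum_{m\le M}|S_m|/m \ll M^A\pi(L)(1+p/L)^{1/12}p^{-\eta}$ by taking $M$ a small power of $p$ (roughly $M\asymp (p^{\eta}/(1+p/L)^{1/12})^{1/(A+1)}$) produces $\rho=\varepsilon/(24(A+1))$ after using $p/L<p^{1/4-\varepsilon}$. Your notation $\eta_m=\eta_m(\varepsilon,m)$ even suggests the exponent could deteriorate with $m$, which would make this optimization impossible; the proof really requires the uniformity in $m$ of the power saving and the polynomial conductor growth. With that correction your argument matches the paper's.
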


We remark that it  is apparent from the proof of Theorem~\ref{thm:Prime1}, 
given in Section~\ref{sec:proof prime}, that we always have $\rho < 1/48$ within 
this approach (and perhaps much smaller depending on the constant $A$ in 
Lemma~\ref{lem:Prime1}). Hence, for large $L$, using different arguments we derive a stronger bound with an explicit 
saving. 

\begin{thm}
\label{thm:Prime2}
Suppose that the polynomials $f(Z), g(Z) \in \Z[Z]$
satisfy~\eqref{eq:Nondeg p}.
Then for any  integer $L \ge 3$, uniformly over  $[\alpha,\beta] \subseteq [0,\pi]$,
we have
\begin{equation*}
\begin{split}
 Q_p(\alpha,\beta; L)  = & \mu_{\tt ST}(\alpha,\beta) \pi(L) \\
&  + O\(
\( L p^{-1/4}+ L^{11/12}+ L^{3/4}p^{1/4}\) L^{c/\log\log L}\),
\end{split}
\end{equation*}
for some absolute constant $c>0$.
\end{thm}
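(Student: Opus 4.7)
My plan is to reduce the Sato--Tate counting problem to Weyl sums against the orthonormal Chebyshev basis of $\mu_{\tt ST}$, and then attack these sums over primes by combining Vaughan's identity with a bilinear estimate of product--set type. Since the characters of $\mathrm{SU}(2)$ are the Chebyshev polynomials of the second kind $U_m(\cos\vartheta) = \sin\bigl((m+1)\vartheta\bigr)/\sin\vartheta$, orthonormal with respect to $\mu_{\tt ST}$, an Erdős-Turán-Niederreiter discrepancy inequality adapted to the Sato--Tate measure reduces the proof to estimating
$$
W_m(L) = \sum_{\substack{\ell \le L \\ \Delta(\ell) \not\equiv 0 \bmod p}} U_m\bigl(\cos\psi_p(E(\ell))\bigr)
$$
uniformly for $1 \le m \le H$, where the parameter $H$ will be optimised at the end and contributes a residual main--term error $O(\pi(L)/H)$.

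For each such $m$, I would apply Vaughan's identity to decompose $W_m(L)$ into a logarithmic number of Type I sums $\sum_{a \le A}\alpha_a \sum_{n \le L/a} U_m\bigl(\cos\psi_p(E(an))\bigr)$ and Type II sums $\sum_{M<a\le 2M}\sum_{N<n\le 2N}\alpha_a\beta_n U_m\bigl(\cos\psi_p(E(an))\bigr)$, with $MN \asymp L$ and divisor--bounded coefficients $\alpha_a, \beta_n$. Each Type I sum, for fixed $a$, is a Weyl sum of $U_m \circ \cos\psi_p \circ E$ along the arithmetic progression of multiples of $a$, and Niederreiter's completion technique combined with the multiplicative--character version of Michel's bound (Lemma~\ref{lem:Mich bound}) produces a saving that, once summed over $a \le A$, contributes the $L p^{-1/4}$ and $L^{3/4}p^{1/4}$ terms.

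For the Type II sums, Cauchy--Schwarz in $a$ reduces matters to estimating
$$
\sum_{a_1, a_2}\Bigl|\sum_{n} U_m\bigl(\cos\psi_p(E(a_1 n))\bigr)\,\overline{U_m\bigl(\cos\psi_p(E(a_2 n))\bigr)}\Bigr|,
$$
whose inner sum is controlled by Weil-type estimates for the trace function of the tensor product of the $m$-th symmetric powers of the Frobenius sheaves attached to the two multiplicative shifts $Z \mapsto a_i Z$; this is precisely the input in the spirit of the product--set bound of Theorem~\ref{thm:ProductSet}. Balancing the Vaughan cutoff $A$ (typically $A \asymp L^{1/3}$) against the Type I and Type II estimates, and then choosing $H$ to balance the discrepancy tail, produces the intermediate error $L^{11/12}$, while the factor $L^{c/\log\log L}$ absorbs the divisor--sum moments and the truncation at $m \le H$.

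The principal obstacle is the Type II bilinear estimate: for $a_1 \ne a_2$ one must show that the specialisations $E(a_1 Z)$ and $E(a_2 Z)$ are generically non--isogenous over $\overline{\F_p}(Z)$, so that the relevant tensor sheaf contains no trivial component and Deligne's Riemann Hypothesis yields square--root cancellation in the inner sum. Identifying the exceptional coincident pairs $(a_1, a_2)$ and absorbing their contribution by trivial bounds is the most delicate point; once this is done, the remainder of the argument is a careful optimisation of the Vaughan parameters to reach the stated error terms.
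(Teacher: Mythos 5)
Your overall architecture matches the paper's: the Erd\H{o}s--Tur\'an--Niederreiter discrepancy reduction to Weyl sums of $\sym_n$, then Vaughan's identity, with Type~I sums handled by completion plus the multiplicative-character form of Michel's bound. The divergence is entirely in the Type~II estimate, and there you have replaced the paper's argument with a genuinely different --- and substantially harder --- one.

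The paper's bilinear bound (Lemma~\ref{lem:Bilin}, feeding into Lemmas~\ref{lem:BilinGen} and~\ref{lem:BilinHyperb}) never touches correlations of two different shifts $E(a_1 Z)$ and $E(a_2 Z)$. Instead it detects the product $uv$ by orthogonality of multiplicative characters of $\F_p^*$: one writes
$$
S=\frac{1}{p-1} \sum_{\chi}\Bigl(\sum_{w}\sym_n\(\psi_p(E(w))\)\overline\chi(w)\Bigr)\Bigl(\sum_{u}\alpha_u\chi(u)\Bigr)\Bigl(\sum_{v}\beta_v\chi(v)\Bigr),
$$
applies Lemma~\ref{lem:Mich bound} to the $w$-sum for \emph{every} character $\chi$, and then Cauchy--Schwarz in $\chi$ together with orthogonality to evaluate the mean squares of $\sum_u\alpha_u\chi(u)$ and $\sum_v\beta_v\chi(v)$. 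The geometry is invoked only once, for a single fixed sheaf $\Sym_n(\cF)\otimes\cL_\chi$, and nothing about the interaction between different dilations of the $Z$-line is required.

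Your route --- Cauchy--Schwarz in one of the bilinear variables to produce
$\sum_{a_1,a_2}\bigl|\sum_n \sym_n(\psi_p(E(a_1n)))\,\overline{\sym_n(\psi_p(E(a_2n)))}\bigr|$
--- is the classical shifted-correlation approach, and you correctly flag the real obstruction: for $a_1\not\equiv a_2 \pmod p$ one needs the geometric monodromy of $\Sym_n(\cF)_{a_1Z}\times\Sym_n(\cF)_{a_2Z}$ to be ``as large as possible'' (a Goursat-type argument), so that the tensor product has no trivial invariants and Deligne gives square-root cancellation. This independence of Frobenius sheaves for distinct multiplicative dilations is not established anywhere in the paper and is not an immediate consequence of the hypothesis~\eqref{eq:Nondeg p}; it is a genuine extra geometric input in the style of Fouvry--Kowalski--Michel. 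In addition you would still need to complete the inner sum over $n$ from an interval to $\F_p$ (another $\log p$ loss) and to isolate the near-diagonal $a_1\equiv a_2 \pmod p$ terms, neither of which you spell out. So while your plan is not wrong in spirit, as written it has a material gap exactly where you suspect one, and it is doing considerably more work than the paper: the multiplicative-character decomposition sidesteps the independence-of-sheaves question entirely, which is precisely why the authors introduce Lemma~\ref{lem:Mich bound} rather than relying on correlation bounds.
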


One should note that the result in Theorem~\ref{thm:Prime2} is
non-trivial only  when $L > p$. We also remark that Theorem \ref{thm:Prime2} is better than Theorem \ref{thm:Prime1} when $L$ is large but not too large compared with $p$ (for example, polynomially lower and upper bounded in terms of $p$).  
Otherwise, Theorem \ref{thm:Prime1} may be better, such as $L \ge \exp(p)$.

\subsection{Our results in the mixed aspect}
\label{sec:mix results}

Here, we establish the Sato--Tate conjecture on average for some families of elliptic curves
which have never been studied in the literature.

Recall that for any integer $t$ with $\Delta(t)\ne 0$,
we use $\pi_{E(t)}(\alpha,\beta; x)$
to denote the number of primes $p \le x$ with $p \nmid N_{E(t)}$ (or equivalently, $\Delta(t) \not\equiv 0 \pmod p$, see Section~\ref{sec:not}) and
$\psi_p(E(t))\in[\alpha,\beta]$.
First, we get an analogue of~\cite[Theorems 10]{ShaShp}.

\begin{theorem}
\label{thm:S-T Setprod}
Suppose that the polynomials $f(Z), g(Z) \in \Z[Z]$
satisfy~\eqref{eq:Nondeg}, and non-empty sets  of integer $\cU, \cV \subseteq [1,x]$, $x\ge 2$, are arbitrary.
Then, uniformly over  $[\alpha,\beta] \subseteq [0,\pi]$,  we have
$$
\frac{1}{ \pi(x) \#\cU\#\cV}\sum_{\substack{u \in \cU, v\in \cV \\ \Delta(uv) \ne 0}}
\pi_{E(uv)}(\alpha,\beta; x)=\mu_{\tt ST}(\alpha,\beta) +O\(\(\frac{x}{\# \cU \# \cV}\)^{1/4}\).
$$
\end{theorem}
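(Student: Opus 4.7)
The plan is to deduce this mixed--aspect statement directly from the vertical--aspect Theorem~\ref{thm:ProductSet}, following the strategy used for the analogous~\cite[Theorem~10]{ShaShp}. First I would swap the order of summation: since $\pi_{E(t)}(\alpha,\beta;x)$ simply counts those $p\le x$ with $\Delta(t)\not\equiv 0\pmod p$ and $\psi_p(E(t))\in[\alpha,\beta]$, we have
$$
S := \sum_{\substack{u\in\cU,\, v\in\cV \\ \Delta(uv)\ne 0}} \pi_{E(uv)}(\alpha,\beta; x) = \sum_{p \le x} N_p^{*}(\alpha,\beta;\cU,\cV),
$$
where $N_p^{*}(\alpha,\beta;\cU,\cV)$ denotes the number of pairs $(u,v) \in \cU\times\cV$ with $\Delta(uv)\not\equiv 0 \pmod p$ and $\psi_p(E(uv)) \in [\alpha,\beta]$. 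The pairs with $\Delta(uv)=0$ in $\Z$ number at most $O(\#\cU+\#\cV)$ and their omission costs a negligible $O(\pi(x)(\#\cU+\#\cV))$.

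For each prime $p \le x$, I would then apply a weighted variant of Theorem~\ref{thm:ProductSet} to the reductions of $\cU$ and $\cV$ modulo $p$. Since different integers in $\cU$ (or $\cV$) may share a residue class, strictly speaking one needs the extension of Theorem~\ref{thm:ProductSet} in which the characteristic functions of $\cU,\cV\subseteq\F_p^{*}$ are replaced by their multiplicity functions. Inspection of the proof of Theorem~\ref{thm:ProductSet}---an Erd\H os--Tur\'an style inequality reducing to Weyl sums of the shape $\sum_{u,v} U_m(\cos\psi_p(E(uv)))$ which factor into sums in $u$ and in $v$ separately, each controlled by Lemma~\ref{lem:Mich bound}---shows that
$$
N_p^{*}(\alpha,\beta;\cU,\cV) = \mu_{\tt ST}(\alpha,\beta)\,\#\cU\,\#\cV + O\bigl((\#\cU\#\cV)^{3/4}\, p^{1/4}\bigr)
$$
remains valid in this multiset setting; the pairs with $\Delta(uv)\equiv 0\pmod p$ but $\Delta(uv)\ne 0$ contribute $O(\#\cU\#\cV/p+\#\cU)$ and are absorbed.

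Summing over $p \le x$, the main terms combine to $\mu_{\tt ST}(\alpha,\beta)\,\pi(x)\,\#\cU\,\#\cV$, while the error contributes
$$
\sum_{p \le x} (\#\cU\#\cV)^{3/4}\, p^{1/4} \ll (\#\cU\#\cV)^{3/4}\,\frac{x^{5/4}}{\log x}.
$$
Dividing through by $\pi(x)\,\#\cU\,\#\cV \asymp x\#\cU\#\cV/\log x$ yields exactly the claimed error $O\bigl((x/(\#\cU\#\cV))^{1/4}\bigr)$.

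The main obstacle I expect is justifying the application of Theorem~\ref{thm:ProductSet} to the integer sets $\cU,\cV$ reduced modulo $p$, especially in the range $p\ll x$ where individual residue classes may be populated with multiplicity of order $x/p$. This is not conceptually new, but requires revisiting the proof of Theorem~\ref{thm:ProductSet} to carry the multiplicity weights through the character sum estimates and to verify that the exponent $1/4$ of $p$ in the error is preserved; the fact that the relevant Weyl sums factor as a product of a sum over $u$ and a sum over $v$ makes this a routine, if somewhat delicate, book--keeping task.
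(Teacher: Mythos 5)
Your approach is essentially the paper's. One clarification worth making: the "weighted variant of Theorem~\ref{thm:ProductSet}" you invoke is precisely Lemma~\ref{lem:Bilin}, which the paper applies directly at this point; it is stated for integer sets $\cU\subseteq[1,U]$, $\cV\subseteq[1,V]$ with arbitrary weight sequences and already carries the multiplicity factors $(U/p+1)$, $(V/p+1)$ through the orthogonality step, so there is no delicate book-keeping left to redo. However, your per-prime error $(\#\cU\#\cV)^{3/4}p^{1/4}$ is, strictly speaking, not what that lemma yields in the range $p<x$: the correct bound has the form
\begin{equation*}
\bigl(x^{1/2}p^{-1/4}+p^{1/4}\bigr)(\#\cU\#\cV)^{3/4}+x^{1/2}p^{-1/2}\#\cU(\#\cV)^{1/2}+\#\cD_p+xp^{-1}\#\cV,
\end{equation*}
where the extra terms come from the multiplicity factor $(x/p+1)^{1/2}$, from the pairs with $p\mid uv$, and from the gap between $\#\cH_p$ and $\#\cU\#\cV$. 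Fortuitously these additional terms, after summation over $p\le x$, are all dominated by the same $\pi(x)x^{1/4}(\#\cU\#\cV)^{3/4}$, so your final answer is correct, but the intermediate estimate as written hides the dominant contribution for small $p$. The paper handles this by working through Lemma~\ref{lem:Bilin} explicitly, splitting off the $p\mid uv$ pairs, and accounting separately for the $\cD_p$ and $\#\cH_p$ corrections before summing.
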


In Theorem~\ref{thm:S-T Setprod}, if $x=o(\# \cU \# \cV)$, then we indeed establish the Sato-Tate conjecture on average for the corresponding family of elliptic curves.

Now, we establish the Sato--Tate conjecture on average when the parameter runs through some new kinds of subsets which have not been studied before.

First, we establish the Sato--Tate conjecture on average with a parameter $t$ from a geometric progression. Namely, given
 integers $t, \lambda$ with $|\lambda|\ge 2$ and real $x \ge 2$,
 we use $\pi_{\lambda}(\alpha,\beta; t, x)$
to denote the number of $p \le x$ with $\Delta(\lambda^t) \not\equiv 0 \pmod p$ and
$\psi_p\(E\(\lambda^t\)\)\in[\alpha,\beta]$. We also define the {\it Erd\H{o}s constant\/}
\begin{equation}
\label{eq:Erdos}
\delta = 1- \frac{1+\log \log 2}{\log 2} =0.086071\ldots.
\end{equation}

Then we have:

\begin{thm}
\label{thm:S-T GeomProgr}
Suppose that the polynomials $f(Z), g(Z) \in \Z[Z]$
satisfy~\eqref{eq:Nondeg}.
Then for any real $x \ge 3$ and integer $$T \ge x^{1/2}
(\log x)^{1+3\delta/2}(\log \log x)^{ 9/4},$$ uniformly over  $[\alpha,\beta] \subseteq [0,\pi]$, we have
\begin{equation*}
\begin{split}
 \frac{1}{\pi(x) T}\sum_{\substack{1\le t \le T \\ \Delta(\lambda^t) \ne 0}} & \pi_{\lambda}(\alpha,\beta; t, x)\\
& =\mu_{\tt ST}(\alpha,\beta)   +O_{\lambda}\((\log x)^{-3\delta/4}(\log \log x)^{-9/8}\),
\end{split}
\end{equation*}
where $\delta$ is given by~\eqref{eq:Erdos}.
\end{thm}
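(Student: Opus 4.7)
The plan is to apply the standard Weyl--Sato--Tate reduction. Via the Beurling--Selberg extremal polynomial construction (or equivalently an Erd\H{o}s--Tur\'an--Koksma type inequality adapted to $d\mu_{\tt ST}$), one approximates $\mathbf{1}_{[\alpha,\beta]}(\psi) - \mu_{\tt ST}(\alpha,\beta)$ up to degree $M$ by a linear combination of Chebyshev polynomials $U_m(\cos\psi)$ of the second kind, $1 \le m \le M$, each of which satisfies $\int_0^\pi U_m(\cos\theta)\, d\mu_{\tt ST}(\theta) = 0$. The proof is thereby reduced to estimating, for every $1 \le m \le M$, the double sum
$$
S_m := \sum_{p \le x}\, \sum_{\substack{1 \le t \le T \\ p \,\nmid\, \Delta(\lambda^t)}} U_m\bigl(\cos\psi_p(E(\lambda^t))\bigr).
$$

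Next I would swap the order of summation. For each prime $p$ coprime to $\lambda$, set $r_p := \ord_p(\lambda)$ and decompose $T = q_p r_p + s_p$ with $0 \le s_p < r_p$. The inner sum over $t$ then splits into $q_p$ complete orbits of the cyclic subgroup $\langle\lambda\rangle \subseteq \F_p^*$ plus an incomplete orbit of length $s_p$. The complete orbit sum is exactly the character sum controlled by the proof of Theorem~\ref{thm:Subgroup}: via Lemma~\ref{lem:Mich bound} and character orthogonality on $\F_p^*$, it is $O(m r_p^{1/2} p^{1/4})$. The incomplete tail is handled by Niederreiter's completion technique~\cite[Lemma~3]{Nied}, which majorises it by $O(\log r_p)$ times the worst twist of a complete orbit sum by a character of $\langle\lambda\rangle$; each such twisted sum is again bounded by $O(m r_p^{1/2} p^{1/4})$ via Lemma~\ref{lem:Mich bound}. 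Combining, we obtain
$$
|\Sigma_p| \ll m\, p^{1/4}\bigl(T r_p^{-1/2} + r_p^{1/2}\log p\bigr),
$$
in addition to the trivial bound $|\Sigma_p| \le (m+1)T$.

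Summing over $p \le x$ produces an expression of the form $T\pi(x)\mu_{\tt ST}(\alpha,\beta) + E_m$, and the main obstacle is the control of $E_m$. This hinges on how often the multiplicative order $r_p$ is either very small (making the full-orbit bound weak) or very large compared with $T$ (producing no full orbits at all). I would partition the primes $p \le x$ according to the size of $r_p$, using the trivial bound on an exceptional set $\cE = \{p \le x : r_p \le R\}$ and the non-trivial bound on its complement. This is precisely where the Erd\H{o}s constant $\delta$ from~\eqref{eq:Erdos} enters: classical Erd\H{o}s--Pomerance-type estimates for the distribution of $\ord_p(\lambda)$, which rest in turn on Erd\H{o}s's smooth-number theorem applied to the divisor $r_p$ of $p-1$, give bounds on $\#\cE$ of the shape $\pi(x)(\log x)^{-\delta + o(1)}$, up to powers of the threshold parameter.

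Finally, the Beurling--Selberg truncation degree $M$ and the partition threshold $R$ are chosen so that the three error contributions---the trigonometric approximation error, the generic error $T\pi(x)\, x^{1/4} R^{-1/2}$ from primes with $r_p > R$, and the trivial contribution $T \cdot \#\cE$ from the exceptional set---are balanced against the target rate $(\log x)^{-3\delta/4}(\log\log x)^{-9/8}$. Equating these contributions forces precisely the lower bound $T \ge x^{1/2}(\log x)^{1+3\delta/2}(\log\log x)^{9/4}$ stated in the theorem, at which point the desired asymptotic follows.
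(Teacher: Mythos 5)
Your proposal has the right global outline (reduce to sums of $\sym_n$, split each inner sum into complete orbits of $\langle\lambda\rangle$ plus an incomplete tail, and control the prime sum via the distribution of $\ord_p\lambda$), but there are two substantive departures from what actually makes the paper's error term come out, and one of them is a genuine gap.

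The first issue is your per-prime bound $|\Sigma_p| \ll m\,p^{1/4}\bigl(Tr_p^{-1/2}+r_p^{1/2}\log p\bigr)$. Lemma~\ref{lem:Subgr} (the character-orthogonality consequence of Lemma~\ref{lem:Mich bound}) bounds a complete-orbit sum by $\ll m\sqrt{p}$, not by $m\,r_p^{1/2}p^{1/4}$; the $r^{1/2}p^{1/4}$ you cite is the \emph{discrepancy} error of Theorem~\ref{thm:Subgroup}, which already has Corollary~\ref{cor:ST Discrep} baked in and is not a character-sum estimate. Your stated bound happens to be true (it is an AM--GM interpolation of $\min(m r_p, m\sqrt p)$), but it does not follow ``via Lemma~\ref{lem:Mich bound} and orthogonality'' as written, and for the incomplete tail it is actively harmful: Lemma~\ref{lem:IncompSubgr} gives $\ll m\sqrt{p}\log p$ uniformly in $r_p$, whereas your $m\,r_p^{1/2}p^{1/4}\log p$ degrades to $m\,p^{3/4}\log p$ when $r_p$ is close to $p$. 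Summed over $p\le x$ this contributes $\pi(x)x^{3/4}\log x$, which is not $o(T\pi(x))$ for $T$ near the threshold $x^{1/2}(\log x)^{1+3\delta/2}(\log\log x)^{9/4}$, so that piece of your plan fails outright.

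The second, more structural, issue is that you apply the Beurling--Selberg/Erd\H{o}s--Tur\'an device once, globally, to the double sum, and then aggregate $S_m=\sum_p\Sigma_p$. The paper instead applies Corollary~\ref{cor:ST Discrep} \emph{separately for each fixed $p$} to the $T$ angles $\{\psi_p(E(\lambda^t))\}$, then sums the per-prime discrepancies. The per-prime route produces $\sum_p k_p^{1/2}p^{1/4}\ll T^{1/2}x^{1/4}S_{1/2}(x;\lambda)$, where $S_{1/2}(x;\lambda)=\sum_p(\ord_p\lambda)^{-1/2}$ is estimated via Lemma~\ref{lem:MultOrd}, giving the stated $(\log x)^{-3\delta/4}(\log\log x)^{-9/8}$ rate. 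The global route instead leads to $\sqrt{T\pi(x)\sum_p k_p\sqrt{p}}$, which by Cauchy--Schwarz is at least as large as the per-prime total; concretely it passes through $S_1(x;\lambda)=\sum_p(\ord_p\lambda)^{-1}$ rather than $S_{1/2}$, and an analogous computation only yields a $(\log x)^{-\delta/4}$-type saving (and with the wrong sign on the $\log\log x$ power). So even with the correct per-prime character-sum bounds, the global application of the discrepancy inequality is strictly weaker and cannot reproduce the theorem's error term. Relatedly, your proposed ``exceptional set'' $\cE=\{p\le x:\;r_p\le R\}$ with $\#\cE\approx\pi(x)(\log x)^{-\delta+o(1)}$ is not the mechanism the paper uses and is not supported by the cited ingredients: the Erd\H{o}s constant enters through Ford's bound (Lemma~\ref{lem:Ford}) on primes $p$ with a divisor of $p-1$ in a dyadic window $(y,2y]$, used inside the proof of Lemma~\ref{lem:MultOrd} to bound the \emph{weighted} sum $S_{1/2}(x;\lambda)$, not through a density estimate for $\{p:r_p\le R\}$.

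In short: to close the gap you need to (i) use the uniform $\ll m\sqrt{p}$ complete-orbit bound and $\ll m\sqrt{p}\log p$ incomplete-tail bound from Lemmas~\ref{lem:Subgr} and~\ref{lem:IncompSubgr}, (ii) apply Corollary~\ref{cor:ST Discrep} prime by prime and only then sum over $p$, and (iii) estimate the resulting $\sum_p k_p^{1/2}p^{1/4}\le T^{1/2}x^{1/4}S_{1/2}(x;\lambda)$ via Lemma~\ref{lem:MultOrd}.
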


 It is possible to get a better error term if one averages on $\lambda$,  however we do not address this question here.

 For families parametrised by primes, 
  averaging the estimate in Theorem~\ref{thm:Prime1}, we get the following result.

  \begin{thm}
\label{thm:S-T Primes 1}
Suppose that the polynomials $f(Z), g(Z) \in \Z[Z]$
satisfy~\eqref{eq:Nondeg}. Then, for any $\varepsilon>0$ with the constant $\rho>0$ 
defined in Theorem~\ref{thm:Prime1}, for any real $x \ge 2$ and integer $L$ with  $L \ge x^{3/4 + \varepsilon}$, 
 uniformly over  $[\alpha,\beta] \subseteq [0,\pi]$ we have
\begin{equation*}
\begin{split}
 \frac{1}{\pi(x) \pi(L)} \sum_{\substack{\textrm{prime } \ell \le L\\
\Delta(\ell) \ne 0}} &
\pi_{E(\ell)}(\alpha,\beta; x)  =\mu_{\tt ST}(\alpha,\beta)    +
 O\(x^{-\rho}\).
\end{split}
\end{equation*}
\end{thm}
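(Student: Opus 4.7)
The natural plan is to swap the order of summation and appeal to Theorem~\ref{thm:Prime1} uniformly in $p$. Writing $S$ for the double sum on the left, we have
$$
S = \sum_{\substack{\ell \le L\text{ prime}\\ \Delta(\ell)\ne 0}} \, \sum_{\substack{p \le x\\ p \nmid N_{E(\ell)}\\ \psi_p(E(\ell)) \in [\alpha,\beta]}} 1
= \sum_{p \le x} \#\bigl\{\ell \le L\text{ prime}:\; \Delta(\ell)\not\equiv 0 \pmod{p},\ \psi_p(E(\ell)) \in [\alpha,\beta]\bigr\} + E,
$$
where $E$ collects the discrepancy between the conditions $p \nmid N_{E(\ell)}$ and $\Delta(\ell)\not\equiv 0 \pmod p$ (which differ only on a finite set of bad primes for the universal family) as well as the handful of integer primes $\ell$ with $\Delta(\ell)=0$. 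Since both pools of exceptions are finite, one finds $E = O(\pi(x) + \pi(L))$, which is negligible after dividing by $\pi(x)\pi(L)$.

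The inner counting function is now exactly $Q_p(\alpha,\beta;L)$ from Section~\ref{sec: vert results}, so I would like to invoke Theorem~\ref{thm:Prime1} for each $p$. Its hypothesis requires $L \ge p^{3/4+\varepsilon}$, and this holds uniformly in $p \le x$ precisely because we assume $L \ge x^{3/4+\varepsilon}$. Therefore, with $\rho>0$ as in Theorem~\ref{thm:Prime1},
$$
S = \mu_{\tt ST}(\alpha,\beta)\,\pi(L)\,\pi(x) + O\!\left(\pi(L)\sum_{p \le x} p^{-\rho}\right) + O(\pi(x)+\pi(L)).
$$

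For the remaining step I would estimate $\sum_{p \le x} p^{-\rho} \ll x^{1-\rho}$ by partial summation from the prime number theorem (or simply by comparison with $\sum_{n \le x} n^{-\rho}$). Dividing through by $\pi(x)\pi(L)$ then yields
$$
\frac{S}{\pi(x)\pi(L)} = \mu_{\tt ST}(\alpha,\beta) + O\!\left(\frac{x^{1-\rho}}{\pi(x)}\right) + O\!\left(\frac{1}{\pi(L)} + \frac{1}{\pi(x)}\right) = \mu_{\tt ST}(\alpha,\beta) + O\bigl(x^{-\rho}\log x\bigr),
$$
and the $\log x$ factor is absorbed by replacing $\rho$ with any slightly smaller constant; equivalently, one reserves an $\varepsilon$-room in the exponent produced by Theorem~\ref{thm:Prime1}.

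There is essentially no hard step here: the whole argument is a mechanical Fubini plus the vertical input of Theorem~\ref{thm:Prime1}. The only point that requires a moment of care is the uniformity check $L \ge p^{3/4+\varepsilon}$ across all $p \le x$, which is precisely why the hypothesis is stated as $L \ge x^{3/4+\varepsilon}$, and the cosmetic absorption of the $\log x$ loss into $x^{-\rho}$.
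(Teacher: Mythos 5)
Your argument is precisely the one the paper uses: its proof of Theorem~\ref{thm:S-T Primes 1} just says the result follows from Theorem~\ref{thm:Prime1} ``after summation over $p$,'' and you have supplied the Fubini swap, the uniformity check $L\ge p^{3/4+\varepsilon}$ for all $p\le x$, and the handling of the finitely many bad primes. One small refinement: the $\log x$ loss you encounter is avoidable, since partial summation with the prime number theorem gives $\sum_{p\le x}p^{-\rho}\ll \pi(x)\,x^{-\rho}$ (rather than merely $\ll x^{1-\rho}$), so dividing by $\pi(x)$ yields $O(x^{-\rho})$ outright and there is no need to reserve $\varepsilon$-room in the exponent.
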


Finally, for large values of $L$, averaging the estimate in Theorem~\ref{thm:Prime2}, we
derive  the following explicit result.

\begin{thm}
\label{thm:S-T Primes 2}
Suppose that the polynomials $f(Z), g(Z) \in \Z[Z]$
satisfy~\eqref{eq:Nondeg}.
Then, for any real $x \ge 2$ and any integer $L\ge 3$, 
 uniformly over  $[\alpha,\beta] \subseteq [0,\pi]$ we have
\begin{equation*}
\begin{split}
& \frac{1}{\pi(x) \pi(L)} \sum_{\substack{\textrm{prime } \ell \le L\\
\Delta(\ell) \ne 0}} 
\pi_{E(\ell)}(\alpha,\beta; x)  \\
&\qquad\qquad =\mu_{\tt ST}(\alpha,\beta)    +
O\(\(x^{-1/4} + L^{-1/12}+ L^{-1/4}x^{1/4}\) L^{c/\log\log L}\), 
\end{split}
\end{equation*}
for some absolute constant $c>0$.
\end{thm}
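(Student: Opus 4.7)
The plan is to obtain Theorem~\ref{thm:S-T Primes 2} by swapping the order of summation in
$$\sum_{\substack{\ell \le L \text{ prime}\\ \Delta(\ell)\ne 0}} \pi_{E(\ell)}(\alpha,\beta;x)$$
and applying Theorem~\ref{thm:Prime2} prime-by-prime in $p$ to the inner sum over $\ell$. With $p$ as the outer variable, for each prime $\ell$ the condition $\Delta(\ell) \not\equiv 0 \pmod p$ already forces $\Delta(\ell) \ne 0$ (since $\Delta(\ell)=0$ in $\Z$ would imply $\Delta(\ell)\equiv 0\pmod p$), so the inner count is exactly $Q_p(\alpha,\beta;L)$ for every prime $p$ at which the pair $(f,g)$ satisfies the modular non-degeneracy condition~\eqref{eq:Nondeg p}. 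The global assumption~\eqref{eq:Nondeg} guarantees that~\eqref{eq:Nondeg p} fails for only $O(1)$ primes $p$; those contribute at most $O(\pi(L))$ to the total, which is absorbed into the final error.

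Applying Theorem~\ref{thm:Prime2} uniformly in $p$ then yields
\begin{equation*}
\sum_{\substack{\ell \le L \text{ prime}\\ \Delta(\ell)\ne 0}} \pi_{E(\ell)}(\alpha,\beta;x) = \mu_{\tt ST}(\alpha,\beta)\pi(L)\pi(x) + O\!\left( L^{c/\log\log L}\sum_{p\le x}\!\left(Lp^{-1/4} + L^{11/12} + L^{3/4}p^{1/4}\right)\!\right).
\end{equation*}
After dividing by $\pi(x)\pi(L)$, the main term produces exactly $\mu_{\tt ST}(\alpha,\beta)$. For the error, I would invoke the Prime Number Theorem together with partial summation to obtain $\sum_{p\le x}p^{-1/4}\ll x^{3/4}/\log x$ and $\sum_{p\le x}p^{1/4}\ll x^{5/4}/\log x$, while the middle sum is simply $\pi(x)L^{11/12}$. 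Using $\pi(x)\pi(L)\asymp xL/(\log x\log L)$, the three pieces then contribute, respectively, $O(x^{-1/4}\log L)$, $O(L^{-1/12}\log L)$, and $O(L^{-1/4}x^{1/4}\log L)$, all multiplied by the common factor $L^{c/\log\log L}$.

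The last step is to absorb the stray $\log L$ into $L^{c/\log\log L}$ by enlarging the constant $c$, which is harmless since $\log L \le L^{1/\log\log L}$ for $L \ge 16$ (and the bound is trivial for bounded $L$). No genuine obstacle arises: the whole argument is a clean bookkeeping exercise once Theorem~\ref{thm:Prime2} is established. The only delicate points, namely matching the $\Delta$-boundary conditions when swapping the summations and taming the logarithmic inflation at the end, are entirely routine.
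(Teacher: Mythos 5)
Your proof is correct and follows essentially the same route as the paper, whose entire proof of this theorem is the remark that it ``follows immediately from Theorem~\ref{thm:Prime2} after summation over $p$.'' Your write-up fills in the implicit bookkeeping: the swap of summations and the observation that $\Delta(\ell)\not\equiv 0\pmod p$ already forces $\Delta(\ell)\ne 0$, discarding the $O(1)$ primes where~\eqref{eq:Nondeg p} fails, the partial-summation bounds $\sum_{p\le x}p^{\pm 1/4}\ll x^{1\pm 1/4}/\log x$, and the absorption of the residual $\log L$ into $L^{c/\log\log L}$, all of which are sound.
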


Note that in Theorem \ref{thm:S-T Primes 2}, if $L\ge x^{1 + \varepsilon}$ and $L$ can be polynomially upper bounded in terms of $x$, then the error term tends to zero when $x$ goes to infinity.

%*****************************************
\section{Preliminaries}
\label{Preliminary}
%*****************************************

\subsection{Primes of good reduction}
\label{sec:not}

We start with the observation that the condition~\eqref{eq:Nondeg}
(over any field $\K$ of characteristic $p >3$)
implies that $\Delta(Z)\in \K[Z]$ is not a constant polynomial. Indeed, if
$\Delta(Z) = c\ne 0$ for some $c \in \K$, then $f(Z)$ and $g(Z)$ have
no common roots.  Since $j(Z)$ is not constant, both $f$ and $g$ are also not constant. Now,  considering the derivative $\Delta'(Z)  = 0$, we easily see that $f$ and $g$ must have common roots, which leads to a contradiction.

Thus, apart from at most finitely many primes $p$, the condition~\eqref{eq:Nondeg} 
in fact includes the condition~\eqref{eq:Nondeg p}, 
although usually the latter one is stronger. 

For  $t \in \Q$, let $N(t)$ denote the conductor of the  specialisation of
$E(Z)$ at $Z = t$. We always consider rational numbers in
the form of irreducible fraction.

Note that for $t\in \Q$, the discriminant $\Delta(t)$ may be a rational number. However, we know that the elliptic curve $E(t)$ has good reduction at prime $p$ if and only if $p$ does not divide both the numerator and denominator of $\Delta(t)$; see~\cite[Chapter VII, Proposition 5.1 (a)]{Silv}.
So, we can say that for any prime $p$, $p \nmid N(t)$ (that is, $E(t)$ has good reduction at $p$) if and only if $\Delta(t)\not\equiv 0 \pmod p$ (certainly, it first requires that $p$ does not divide the denominator of $\Delta(t)$).

\subsection{Preparations for distribution of angles}
\label{sec:preparation}

For $m$ arbitrary elements $w_1,\ldots,w_m \in [-1,1]$ (not necessarily distinct) and an arbitrary subinterval $\cJ \subseteq [-1,1]$, let $A(\cJ;m)$ be the number of integers $i$, $1\le i \le m$, with $w_i \in \cJ$.  For any $-1\le a < b \le 1$, define the  function
$$
G(a,b)=\frac{2}{\pi} \int_{a}^{b} (1-z^2)^{1/2} \, {\rm d} z.
$$
We also recall the Chebyshev polynomials $U_n$ of the second kind, on $[-1,1]$ they are defined by
$$
U_n(z) = \frac{\sin((n+1)\arccos z)}{(1-z^2)^{1/2}} \quad \textrm{for $z\in [-1,1]$},
$$
where $n$ is a nonnegative integer. In particular, for $\vartheta \in [0,\pi]$, we have
$$
U_n(\cos \vartheta) = \sym_n (\vartheta),
$$
where
\begin{equation}
\label{eq:sym_n}
\sym_n (\vartheta) =\frac{\sin \((n+1)\vartheta\) }{\sin \vartheta}.
\end{equation}

The following result is exactly from \cite[Lemma 17]{ShaShp}, which is a direct consequence of a result of Niederreiter~\cite[Lemma~3]{Nied}.
\begin{lemma}
\label{lem:Nied}
For any integer $k\ge 1$, we have
$$
\max_{-1\le a < b \le 1} \left| A([a,b];m) - mG(a,b) \right|
\ll \frac{m}{k} + \sum_{n=1}^{k}\frac{1}{n} \left| \sum_{i=1}^{m} U_n(w_i) \right|.
$$
\end{lemma}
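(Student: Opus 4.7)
The plan is to deduce this as a specialisation of Niederreiter's general Erd\H{o}s--Tur\'an--Koksma type inequality \cite[Lemma~3]{Nied} to the Sato--Tate measure on $[-1,1]$ whose density is $\tfrac{2}{\pi}(1-z^2)^{1/2}$. The Chebyshev polynomials $U_n$ of the second kind are orthonormal with respect to this measure (with $U_0\equiv 1$), so they play exactly the role that the additive characters $e(n\,\cdot)$ play in the classical Erd\H{o}s--Tur\'an inequality on the torus. This makes the statement a natural ``Sato--Tate analogue'' of Erd\H{o}s--Tur\'an, and it is derived by essentially the same machinery.

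Concretely, I would construct polynomial majorants and minorants $P^\pm(z)$ of $\mathbf{1}_{[a,b]}(z)$ of degree at most $k$, via the Beurling--Selberg--Vaaler construction adapted to the Chebyshev system, so that
$$
P^-(z)\le \mathbf{1}_{[a,b]}(z)\le P^+(z)\qquad (z\in[-1,1]),
$$
with an $L^1$-error bound
$$
\int_{-1}^{1}\bigl(P^+(z)-P^-(z)\bigr)\,\frac{2}{\pi}(1-z^2)^{1/2}\,{\rm d}z\ll \frac{1}{k}.
$$
Expanding in the orthonormal basis gives $P^\pm(z)=c_0^\pm+\sum_{n=1}^{k}c_n^\pm U_n(z)$, where $c_0^\pm=G(a,b)+O(1/k)$ and, by the standard argument for such approximations, $|c_n^\pm|\ll 1/n$.

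Summing over the points and sandwiching,
$$
\sum_{i=1}^{m}P^-(w_i)\le A([a,b];m)\le \sum_{i=1}^{m}P^+(w_i),
$$
one gets
$$
\bigl|A([a,b];m)-m\,G(a,b)\bigr|\ll \frac{m}{k}+\sum_{n=1}^{k}\frac{1}{n}\Bigl|\sum_{i=1}^{m}U_n(w_i)\Bigr|
$$
uniformly in $[a,b]\subseteq[-1,1]$, which is the claim. Equivalently, one can change variables $w_i=\cos\vartheta_i$, so that $U_n(w_i)=\sin((n+1)\vartheta_i)/\sin\vartheta_i$ by \eqref{eq:sym_n}, and run the classical Erd\H{o}s--Tur\'an argument on $[0,\pi]$ against the angular Sato--Tate density $\tfrac{2}{\pi}\sin^2\vartheta$.

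The only nontrivial step is producing the two-sided polynomial approximation $P^\pm$ with the two simultaneous properties (uniform $L^1$-approximation of order $1/k$ \emph{and} Chebyshev coefficients of order $1/n$); this is precisely what Niederreiter's lemma packages for us, so after checking the orthonormality constants of the $U_n$ and the change of variable described above, the statement follows by direct specialisation, matching \cite[Lemma~17]{ShaShp}.
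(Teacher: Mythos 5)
Your proposal ultimately rests on the same citation as the paper: the paper states only that the lemma is \cite[Lemma~17]{ShaShp}, itself a direct consequence of Niederreiter's \cite[Lemma~3]{Nied}, and you likewise concede that the nontrivial step (the two-sided polynomial approximation with simultaneous $L^1$ and coefficient control) is exactly what Niederreiter's lemma supplies. The surrounding Beurling--Selberg--Vaaler commentary is a plausible account of what lies behind that lemma, but the final proof you give is the same appeal to \cite[Lemma~3]{Nied} as in the paper.
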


\begin{cor}
\label{cor:ST Discrep}
Given $m$ arbitrary angles $\psi_1,\ldots,\psi_m \in [0, \pi]$ (not necessarily distinct), assume that
for some  constant $A>0$ 
 we have
$$
 \left| \sum_{i=1}^{m} \sym_n(\psi_i) \right| \le n^A \sigma
$$
for every integer $n \ge 1$.
Then, uniformly over  $[\alpha,\beta] \subseteq [0,\pi]$, we have
$$
\# \{1\le i \le m : \psi_i\in [\alpha,\beta]\} =  \mu_{\tt ST}(\alpha,\beta) m
+ O\(m^{A/(A+1)} \sigma^{1/(A+1)}\).
$$
\end{cor}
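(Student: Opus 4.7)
The plan is to apply Lemma~\ref{lem:Nied} to the transformed sequence $w_i = \cos \psi_i \in [-1,1]$ and to optimize over the truncation parameter $k$.

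First, since the map $\cos : [0,\pi] \to [-1,1]$ is a decreasing bijection, the count $\#\{i : \psi_i \in [\alpha,\beta]\}$ equals $A([\cos\beta, \cos\alpha]; m)$ in the notation of Section~\ref{sec:preparation}. Moreover, the second integral representation in~\eqref{eq:ST dens} gives $G(\cos\beta,\cos\alpha) = \mu_{\tt ST}(\alpha,\beta)$, so it suffices to bound $|A([\cos\beta,\cos\alpha]; m) - m\, G(\cos\beta,\cos\alpha)|$.

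Next, I would use the Chebyshev identity $U_n(\cos\psi) = \sym_n(\psi)$ recorded just before~\eqref{eq:sym_n}, together with the hypothesis of the corollary, to transform the sums of $U_n(w_i)$ appearing in Lemma~\ref{lem:Nied} into sums of $\sym_n(\psi_i)$:
$$
\left| \sum_{i=1}^{m} U_n(w_i) \right| = \left| \sum_{i=1}^{m} \sym_n(\psi_i) \right| \le n^A \sigma.
$$
Substituting into Lemma~\ref{lem:Nied} yields, uniformly in $[\alpha,\beta]$,
$$
\left| A([\cos\beta,\cos\alpha]; m) - m\, \mu_{\tt ST}(\alpha,\beta) \right| \ll \frac{m}{k} + \sigma \sum_{n=1}^{k} n^{A-1} \ll \frac{m}{k} + \sigma k^{A},
$$
where the last estimate follows, for fixed $A>0$, by comparing the sum with an integral (the implied constant is allowed to depend on $A$, which is a fixed constant in the statement).

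Finally, I would optimize by balancing the two error terms. If $m \le \sigma$, then $m^{A/(A+1)} \sigma^{1/(A+1)} = m \cdot (\sigma/m)^{1/(A+1)} \ge m$, so the trivial bound is already of the required strength. Otherwise, choosing $k = \lceil (m/\sigma)^{1/(A+1)} \rceil \ge 1$ balances both terms and produces the claimed error $O\(m^{A/(A+1)} \sigma^{1/(A+1)}\)$. There is no real obstacle beyond this routine discrepancy-to-character-sum conversion; the only care needed is the correct handling of the Chebyshev identity and the optimization boundary case.
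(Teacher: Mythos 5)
Your proof is correct and follows essentially the same route as the paper's: apply Lemma~\ref{lem:Nied} to the sequence $\cos\psi_1,\ldots,\cos\psi_m$, invoke $U_n(\cos\psi)=\sym_n(\psi)$ and the hypothesis to bound the character-type sums, obtain $\ll m/k + \sigma k^A$, dispatch the case $\sigma \ge m$ trivially, and optimize with $k=\rf{(m/\sigma)^{1/(A+1)}}$. The extra detail you supply (the monotonicity of $\cos$, the integral comparison for $\sum n^{A-1}\ll k^A$) is exactly what the paper leaves implicit.
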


\begin{proof}
We apply Lemma~\ref{lem:Nied}
to the sequence $\cos \psi_1, \ldots, \cos \psi_m$ and obtain
$$
\max_{[\alpha,\beta] \subseteq [0,\pi]}\Big|\# \{1\le i \le m~:~\psi_i\in [\alpha,\beta]\}
-   \mu_{\tt ST}(\alpha,\beta)m \Big|
\ll  \frac{m}{k} +  k^A \sigma. 
$$
Now, assume that $\sigma < m$
as otherwise the result is trivial. 
Then, we conclude the proof by taking 
$k = \rf{(m/\sigma)^{1/(A+1)}}$.
\end{proof}

\subsection{Bounds on some single  sums}
\label{sec:Sing}

Michel~\cite[Proposition~1.1]{Mich} gives a bound for the sum of the function
$\sym_n (\vartheta)$, given by~\eqref{eq:sym_n} twisted by
additive characters.

We refer to~\cite{IwKow} for background on characters. We use the notation
$\ep(z) = \exp(2 \pi i z/p)$ and record here the following immediate consequence
of ~\cite[Proposition~1.1]{Mich}.

\begin{lemma}
\label{lem:Mich bound0} If the polynomials $f(Z), g(Z) \in \Z[Z]$
satisfy~\eqref{eq:Nondeg}, for any prime $p$ we have
$$
\sum_{\substack{w \in \F_p\\
\Delta(w) \ne 0}}
\sym_n\(\psi_p(E(w))\) \ep\(mw\) \ll
np^{1/2},
$$
uniformly over  all
integers $m\ge 0$ and $n\ge 1$.
\end{lemma}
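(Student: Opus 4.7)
The plan is to extract this bound directly from Michel's Proposition~1.1 in~\cite{Mich}, which handles sums of $\sym_n$-values of Frobenius angles over an elliptic family, twisted by additive characters. The main work is the standard translation of the question into the language of $\ell$-adic trace functions, after which the cited proposition applies essentially verbatim.

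First, by the Eichler--Shimura relation and Deligne's identification of the Frobenius eigenvalues on $H^1$ of an elliptic curve as $\sqrt{p}\, e^{\pm i \psi_p(E(w))}$, one has
$$
\sym_n\bigl(\psi_p(E(w))\bigr) \;=\; p^{-n/2}\,\Tr\bigl(\Frob_w \mid (\Sym^n \cF_E)_{\overline{w}}\bigr),
$$
where $\cF_E = R^1 \pi_{!}\,\Q_\ell$ is the $\ell$-adic sheaf on the open subset $U \subset \A^1_{\F_p}$ where $\Delta(Z) \not\equiv 0 \pmod p$, attached to the elliptic surface defined by~\eqref{eq:Family AB}. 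Hypothesis~\eqref{eq:Nondeg}, especially the non-constancy of $j(Z)$, ensures that the geometric monodromy of $\cF_E$ is large, so that $\Sym^n \cF_E$ is geometrically irreducible and non-trivial for every $n \ge 1$. Our sum is then, up to the normalizing factor $p^{-n/2}$, exactly the twisted trace sum
$$
\sum_{w \in U(\F_p)} t_{\Sym^n \cF_E}(w)\, \ep(mw),
$$
which is the object estimated by Michel's proposition.

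The proof of that proposition, which one would then simply cite, relies on Deligne's Riemann Hypothesis over $\F_p$ applied to $H^1_c(\A^1_{\overline{\F_p}}, \Sym^n \cF_E \otimes \cL_m)$, where $\cL_m$ is the Artin--Schreier sheaf associated with $w\mapsto \ep(mw)$. The factor $n$ in the bound arises from the Euler--Poincar\'e estimate for the rank $n+1$ sheaf $\Sym^n \cF_E$ combined with a uniform bound on the number and ramification of its singular points; the factor $p^{1/2}$ is the square-root cancellation coming from the purity of $H^1_c$. The only minor point to double-check is the case $m=0$, where $\cL_m$ is trivial and no additive twist is present: geometric irreducibility and non-triviality of $\Sym^n \cF_E$ for $n \ge 1$ still force $H^0_c$ and $H^2_c$ to vanish, so only $H^1_c$ (of dimension $O(n)$) contributes and the same bound holds. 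This is the only genuine obstacle; once it is noted, the lemma follows immediately from Michel's estimate.
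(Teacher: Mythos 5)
Your argument matches the paper exactly: the paper simply records this lemma as an ``immediate consequence'' of Michel's Proposition~1.1, and your unpacking of why it is immediate (identifying $\sym_n(\psi_p(E(w)))$ with the trace of $\Frob_w$ on $\Sym^n\cF_E$, invoking non-constancy of $j$ for the monodromy, applying Deligne's RH to $H^1_c$ with an Artin--Schreier twist, and noting the $m=0$ case via geometric irreducibility) is the standard translation that Michel's proof already contains. No discrepancy to report.
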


We need the following analogue of~\cite[Proposition~1.1]{Mich}
(in a more precise form than Lemma~\ref{lem:Mich bound0}) for the sum of the function
$\sym_n (\vartheta)$ twisted by
multiplicative characters.

\begin{lemma}
\label{lem:Mich bound}
Given a prime $p$, if the polynomials $f(Z), g(Z) \in \Z[Z]$
satisfy~\eqref{eq:Nondeg p}, then for any  multiplicative character
$\chi$ of $\F_p^*$ and any integer $n \ge 1$,  we have
$$
\left| \sum_{\substack{w \in \F_p\\
\Delta(w) \ne 0}}
\sym_n\(\psi_p(E(w))\) \chi\(w\) \right|
\le (n+1) \sqrt{p} \deg \Delta.
$$
\end{lemma}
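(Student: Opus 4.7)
The strategy is to realise the sum as a trace of Frobenius on the $\ell$-adic cohomology of a lisse sheaf on an open affine curve, in the spirit of Michel's proof of Lemma~\ref{lem:Mich bound0}, but twisting by a Kummer sheaf attached to $\chi$ rather than an Artin--Schreier sheaf.

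Fix a prime $\ell\neq p$. Over $U=\{Z\in\A^1_{\F_p}:\Delta(Z)\neq 0\}$ the Weierstrass equation~\eqref{eq:Family AB} defines a smooth proper relative elliptic curve $\pi\colon\cE\to U$, and the lisse $\Q_\ell$-sheaf $\cG=R^1\pi_*\Q_\ell$ has rank $2$, is pure of weight $1$, and has Frobenius trace $a_p(E(w))=2\sqrt{p}\cos\psi_p(E(w))$ at each $w\in U(\F_p)$. Its symmetric power $\Sym^n\cG$ is then lisse of rank $n+1$, pure of weight $n$, with trace $p^{n/2}\sym_n(\psi_p(E(w)))$ at $w$. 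Let $\cL_\chi$ denote the Kummer sheaf on $\G_{m,\F_p}$ attached to $\chi$: lisse of rank $1$, pure of weight $0$, and tame everywhere. On $V=U\cap\G_{m,\F_p}$ I would set
$$
\cF=(\Sym^n\cG)|_V\otimes\cL_\chi,
$$
so that $\tr(\Frob_w\mid\cF)=p^{n/2}\sym_n(\psi_p(E(w)))\chi(w)$ for every $w\in V(\F_p)$, and then apply the Grothendieck--Lefschetz trace formula to rewrite the sum in the lemma as
$$
p^{-n/2}\sum_{i=0}^{2}(-1)^i\tr\bigl(\Frob_p\mid H^i_c(V_{\bar\F_p},\cF)\bigr).
$$

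The crucial geometric input is that condition~\eqref{eq:Nondeg p} forces $j(Z)\bmod p$ to be non-constant, whence by a classical theorem on monodromy of non-isotrivial elliptic pencils the geometric monodromy of $\cG$ on $V_{\bar\F_p}$ contains $\SL_2$. Consequently $\Sym^n\cG$ is geometrically irreducible of rank $n+1\geq 2$ for $n\geq 1$, and tensoring with the rank-one $\cL_\chi$ preserves geometric irreducibility, so $\cF$ admits no trivial geometric constituent. Thus $H^0_c(V_{\bar\F_p},\cF)=H^2_c(V_{\bar\F_p},\cF)=0$, and Deligne's Weil~II bounds every eigenvalue of $\Frob_p$ on $H^1_c$ by $p^{(n+1)/2}$.

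The last step is the Grothendieck--Ogg--Shafarevich computation
$$
\dim H^1_c=-\chi_c(V_{\bar\F_p},\cF)=(n+1)(|S|-2)+\sum_{x\in S}\mathrm{Sw}_x(\cF),
$$
where $S=\P^1_{\bar\F_p}\setminus V\subseteq\{0,\infty\}\cup\{w:\Delta(w)=0\}$ and hence $|S|\leq\deg\Delta+2$. Since $\cG$ has tame local monodromy at every finite point of bad reduction outside residue characteristics $2$ and $3$ and $\cL_\chi$ is tame everywhere, every Swan conductor vanishes, yielding $\dim H^1_c\leq(n+1)\deg\Delta$; combined with the weight bound and division by $p^{n/2}$ this gives the asserted $(n+1)\deg\Delta\sqrt{p}$. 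The main obstacle I anticipate is the sharp verification that $\cF$ has no trivial geometric constituent together with the uniform vanishing of Swan conductors at every puncture, including $\infty$, where the local behaviour depends on $\deg f$ and $\deg g$; the residual small-$p$ cases would need a separate argument but are already covered by the trivial bound $(n+1)p$ as soon as $\deg\Delta\geq\sqrt{p}$.
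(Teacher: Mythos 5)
Your proof is essentially identical to the paper's: the paper twists $\Sym_n$ of the half-Tate-twisted sheaf $R^1\pi_!\Q_\ell(1/2)$ by the Kummer sheaf $\cL_\chi$ and cites Katz's Key Lemma to package the vanishing of $H^0_c,H^2_c$ (coming from the big geometric monodromy of the non-isotrivial elliptic pencil) together with the Ogg--Shafarevich bound $\dim H^1_c\le (n+1)\deg\Delta$ that you spell out. The two concerns you flag at the end are not genuine obstructions: for $p>3$ the local monodromy of $R^1\pi_*\Q_\ell$ at every bad fibre, including $\infty$, is tame because semistable reduction of an elliptic curve is attained after a cover of degree dividing $12$, and for small $p$ the trivial bound $(n+1)p$ makes the claim vacuous, as you observe.
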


\begin{proof}
The proof is similar to that of~\cite[Proposition~1.1]{Mich}
and is  a rather standard application of techniques of
\'etale cohomology (see, for example,~\cite{Milne} for a general reference)
and the work of Deligne and Katz (see~\cite{Katz1,Katz2}).
So, we only
point out the changes that need to be made to the argument
of the proof of~\cite[Proposition~1.1]{Mich} and refer
to~\cite{Mich} for more details and  the main argument.

We work over $\F_p$.
Let $U = \A^1 -\{Z\Delta(Z)=0\}$ and $\mathcal{E}$ the total space of
the family of elliptic curves given by~\eqref{eq:Family AB} over $U$.
Consider the map $\pi: \mathcal{E} \to U$ (given by $Z$).

Using  the standard notation of $\pi_!$ as the  direct image functor
and  $R^1\pi_!$ as the  first derived functor of $\pi_!$ (see~\cite{Milne}),
we now consider the sheaf $\cF = R^1 \pi_! \Q_{\ell}(1/2)$.

The desired result follows easily, for example,
from~\cite[Key Lemma, Page~286]{Katz1},
applied to $\Sym_n(\cF)\otimes \cL_{\chi}$,
once the hypotheses are checked, where $\cL_{\chi}$ is the Kummer
sheaf associated to $\chi$, as in~\cite{Katz2} and
that the needed facts about it are proved  in~\cite[Section~7]{Katz2}.

Michel~\cite{Mich} only needs to
work in the larger open set $\A^1 - \{ \Delta=0 \}$, but $\cL_{\chi}$ is not
well-behaved at $Z=0$ unlike the sheaf corresponding to an additive character.
On the other hand, $\cL_{\chi}$ is tamely ramified.
Just as in~\cite{Mich}, using  $\Sym_n$ the $n$-th symmetric power of a sheaf,
we obtain the triviality of the following cohomology groups
$$
H^i(U,\Sym_n(\cF)\otimes \cL_{\chi})=0, \qquad i\ne 1,
$$
because of the monodromy of $\cF$ computed there and the fact that
$\cL_{\chi}$  is a pure sheaf of rank one over $U$.

To complete the proof
we need a formula for the dimension of the first cohomology group
$H^1(U,\Sym_n(\cF)\otimes \cL_{\chi})$.
As both $\Sym_n(\cF)$ and  $\cL_{\chi}$ are  lisse over $U$ and $\cL_{\chi}$ is
tame of rank one, this dimension is the rank of $\Sym_n(\cF)$, namely $n+1$, times the Euler
characteristic of $U $, that is
$$\# (\P^1 - U) + 2 \cdot 0 - 2 \le \deg \Delta,
$$
proving the desired estimate.
\end{proof}

Using characters to detect a multiplicative  subgroup of $\F_p^*$, we immediately
derive from Lemma~\ref{lem:Mich bound} a more general result.

\begin{lemma}
\label{lem:Subgr} Given a prime $p$, if the polynomials $f(Z), g(Z) \in \Z[Z]$
satisfy~\eqref{eq:Nondeg p}, then for any multiplicative subgroup
$\cG \subseteq \F_p^*$, any  multiplicative character
$\chi$ of $\F_p^*$ and any integer $n \ge 1$,  we have
$$
\left| \sum_{\substack{w \in \cG \\
\Delta(w) \ne 0}}
\sym_n\(\psi_p(E(w))\) \chi(w) \right|
\le (n+1)\sqrt{p}\deg \Delta .
$$
\end{lemma}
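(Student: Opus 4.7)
The plan is to detect membership in $\cG$ by multiplicative character orthogonality and thereby reduce to Lemma~\ref{lem:Mich bound}. Set $r = \#\cG$ and $d = (p-1)/r$. The multiplicative characters $\psi$ of $\F_p^*$ that are trivial on $\cG$ form a subgroup of order $d$ in the dual group of $\F_p^*$, so for every $w \in \F_p^*$
$$
\mathbf{1}_{\cG}(w) = \frac{1}{d}\sum_{\psi|_{\cG}=1}\psi(w).
$$

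First I would insert this identity into the sum to be estimated, extending $\chi$ (and each $\psi$) to $\F_p$ by $\chi(0)=\psi(0)=0$, and swap the order of summation to obtain
$$
\sum_{\substack{w\in\cG\\ \Delta(w)\ne 0}}\sym_n\bigl(\psi_p(E(w))\bigr)\chi(w)
= \frac{1}{d}\sum_{\psi|_{\cG}=1}\,\sum_{\substack{w\in\F_p\\ \Delta(w)\ne 0}}\sym_n\bigl(\psi_p(E(w))\bigr)(\chi\psi)(w).
$$
The product $\chi\psi$ is again a multiplicative character of $\F_p^*$, so Lemma~\ref{lem:Mich bound}, applied uniformly to each $\chi\psi$, bounds every inner sum by $(n+1)\deg\Delta\cdot\sqrt{p}$.

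Finally, summing the $d$ resulting bounds and dividing by $d$ yields
$$
\left|\sum_{\substack{w\in\cG\\ \Delta(w)\ne 0}}\sym_n\bigl(\psi_p(E(w))\bigr)\chi(w)\right|
\le \frac{1}{d}\cdot d\cdot(n+1)\deg\Delta\cdot\sqrt{p} = (n+1)\deg\Delta\cdot\sqrt{p},
$$
which is exactly the claimed estimate. The argument is a routine orthogonality trick and there is no essential obstacle; the only mild subtlety is to ensure that the extension of $\chi\psi$ by zero at $w=0$ is compatible with the convention used in Lemma~\ref{lem:Mich bound}, which it is, since in both statements the summation variable is restricted by $\Delta(w)\ne 0$ and multiplicative characters vanish at $0$.
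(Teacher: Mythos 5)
Your proof is correct and follows essentially the same route as the paper: both use multiplicative character orthogonality to pass from the sum over $\cG$ to full sums over $\F_p$ twisted by characters $\chi\phi$, and then invoke Lemma~\ref{lem:Mich bound}. The only cosmetic difference is that you detect $w\in\cG$ directly via the $d$ characters trivial on $\cG$, whereas the paper inserts the Kronecker delta $\frac{1}{p-1}\sum_{\phi}\phi(wu^{-1})$ over the full dual group and then observes that $\sum_{w\in\cG}\chi(w)\phi(w)$ vanishes unless $\chi\phi$ is trivial on $\cG$; your bookkeeping is arguably a bit cleaner but the underlying idea is identical.
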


\begin{proof}
Let $\cX_p$ denote the set of all $p-1$ multiplicative characters of $\F_p$.
Using the orthogonality of multiplicative characters, we obtain
\begin{equation*}
\begin{split}
 \sum_{\substack{w \in \cG \\
\Delta(w) \ne 0}} &
\sym_n\(\psi_p(E(w))\) \chi(w) \\
& \quad  = \sum_{\substack{u \in \F_p^* \\
\Delta(u) \ne 0}}
\sym_n\(\psi_p(E(u))\)
\sum_{w\in \cG} \frac{\chi(w)}{p-1}  \sum_{\phi \in \cX_p}\phi(wu^{-1}) \\
& \quad = \frac{1}{p-1} \sum_{\phi \in \cX_p} \sum_{\substack{u \in \F_p^* \\ \Delta(u) \ne 0}} \sym_n\(\psi_p(E(u))\) \bar{\phi} (u) \sum_{w\in \cG} \chi(w) \phi(w),
\end{split}
\end{equation*}
where $\bar{\phi} (u) = \phi (u^{-1})$.
So, Lemma~\ref{lem:Mich bound} yields that
\begin{align*}
\left| \sum_{\substack{w \in \cG \\
\Delta(w) \ne 0}}
\sym_n\(\psi_p(E(w))\) \chi(w) \right|
& \le \frac{(n+1)\sqrt{p}\deg \Delta }{p-1} \sum_{\phi \in \cX_p} \left| \sum_{w\in \cG} \chi(w)\phi(w) \right| \\
& = (n+1)\sqrt{p}\deg \Delta,
\end{align*}
where the identity follows from the fact that the sum $\sum_{w\in \cG} \chi(w)\phi(w)$ is equal to $\# \cG$ if the restriction of $\phi$ to $\cG$ is the inverse of $\chi$ and zero otherwise.
\end{proof}

From  Lemma~\ref{lem:Subgr}, we see that for any polynomials $f(Z), g(Z) \in \Z[Z]$
satisfying~\eqref{eq:Nondeg},  we have
$$
 \sum_{\substack{w \in \cG \\
\Delta(w) \ne 0}}
\sym_n\(\psi_p(E(w))\) \chi(w)
\ll n \sqrt{p}, 
$$
which is how we usually apply it.

Furthermore, we also have an analogue of Lemma~\ref{lem:Subgr}
for incomplete sums  which follows from the standard reduction
between complete and incomplete sums
(see~\cite[Section~12.2]{IwKow}). 

\begin{lemma}
\label{lem:IncompSubgr}
If the polynomials $f(Z), g(Z) \in \Z[Z]$
satisfy~\eqref{eq:Nondeg}, then for any prime $p$, any integer $\lambda$
with $\gcd(\lambda,p) =1$ and of multiplicative order $r$ modulo $p$,
 any positive integer $T \le r$ and  for any integer $n \ge 1$,  we have
$$
  \sum_{\substack{t =1 \\
\Delta(\lambda^t) \not\equiv 0 \pmod p}}^T
\sym_n\(\psi_p(E(\lambda^t))\)
\ll  n \sqrt{p} \log p.
$$
\end{lemma}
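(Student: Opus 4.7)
The plan is to derive Lemma~\ref{lem:IncompSubgr} from Lemma~\ref{lem:Subgr} via the standard completing technique that reduces an incomplete sum over a geometric progression to complete sums over the cyclic subgroup $\cG = \langle \lambda \rangle \subseteq \F_p^*$ of order $r$ twisted by multiplicative characters. Set $F(w) = \sym_n(\psi_p(E(w)))$ when $\Delta(w)\not\equiv 0 \pmod p$ and $F(w)=0$ otherwise, and write $S$ for the sum we want to bound.

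First I would insert the orthogonality identity
$$
\mathbf{1}_{t \equiv s \!\!\!\pmod r} = \frac{1}{r}\sum_{a=0}^{r-1} e_r(a(t-s)), \qquad e_r(x) = \exp(2\pi i x /r),
$$
for $1 \le s \le T \le r$, which gives
$$
S = \frac{1}{r}\sum_{a=0}^{r-1} \left(\sum_{s=1}^{T} e_r(-as)\right) \sum_{t=1}^{r} F(\lambda^t)\, e_r(at).
$$
The key observation is that the map $\lambda^t \mapsto e_r(at)$ defines a character of $\cG$ which extends (in $(p-1)/r$ ways) to a multiplicative character $\chi_a$ of $\F_p^*$; fix any such extension. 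Since $\{\lambda^t : 1\le t\le r\} = \cG$, the inner sum equals
$$
\sum_{\substack{w \in \cG\\ \Delta(w)\not\equiv 0}} \sym_n(\psi_p(E(w)))\, \chi_a(w),
$$
to which Lemma~\ref{lem:Subgr} applies, yielding the bound $\ll n\sqrt{p}$ uniformly in $a$.

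Next I would estimate the geometric progression weights
$$
\sum_{a=0}^{r-1}\left|\sum_{s=1}^{T} e_r(-as)\right| \ll r\log r,
$$
which is the standard bound obtained from $|\sum_{s=1}^{T} e_r(-as)| \ll \min(T,\, r/\|a/r\|)$. Combining these two estimates gives
$$
|S| \ll \frac{1}{r}\cdot n\sqrt{p}\cdot r\log r \ll n\sqrt{p}\log p,
$$
since $r \le p-1$.

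There is no serious obstacle; the only small point of care is that Lemma~\ref{lem:Subgr} is stated under~\eqref{eq:Nondeg p}, whereas Lemma~\ref{lem:IncompSubgr} assumes only~\eqref{eq:Nondeg}. This is harmless: the condition~\eqref{eq:Nondeg p} holds for all $p$ exceeding a constant depending on $f,g$ (as remarked at the start of Section~\ref{sec:not}), and for the finitely many remaining primes the claimed bound $n\sqrt{p}\log p$ is trivial since $|F|\le n+1$. I would mention this in one line and then present the computation above.
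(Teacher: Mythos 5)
Your proof is correct and follows essentially the same path as the paper's: a standard completing-technique reduction of the incomplete sum to complete character-twisted sums, followed by the Michel-type square-root bound. The one (cosmetic) difference is the choice of completing modulus: you complete modulo $r$ with additive characters $e_r(a\cdot)$, convert $\lambda^t\mapsto e_r(at)$ into a multiplicative character $\chi_a$ of $\F_p^*$ by extension, and then invoke Lemma~\ref{lem:Subgr}; the paper instead completes modulo $p-1$ over the full character group of $\F_p^*$ and applies Lemma~\ref{lem:Mich bound} directly, leaving the incomplete exponential sum $\sum_{t\le T}\e(st/r)$. Both routes give $\ll n\sqrt p\log p$ (yours in fact gives the marginally sharper $n\sqrt p\log r$), and your remark on passing from hypothesis~\eqref{eq:Nondeg p} to~\eqref{eq:Nondeg} by absorbing the finitely many exceptional primes into the implied constant mirrors what the paper does implicitly in the comment following Lemma~\ref{lem:Subgr}.
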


\begin{proof}
The proof is based on the standard reduction between complete and incomplete sums (see~\cite[Section~12.2]{IwKow}). Indeed, let
$\cG  \subseteq \F_p^*$ be the multiplicative subgroup
of $\F_p^*$ generated by $\lambda$.
Let $d=(p-1)/r$. Then, there exists a  primitive
element $\xi \in \F_p^*$ with $\lambda = \xi^d$.
Now for $w \in\F_p^*$ we denote by
$\ind w$ the unique integer  $z\in [0, p-2]$ with $w = \xi^z$.
Using the orthogonality
of exponential function $\e(z) = \exp(2 \pi i z)$, we write
\begin{equation*}
\begin{split}
& \sum_{\substack{t =1 \\
\Delta(\lambda^t) \not\equiv 0 \pmod p}}^T
\sym_n\(\psi_p(E(\lambda^t))\)\\
& \qquad \quad  = \sum_{\substack{w \in \F_p \\
\Delta(w) \ne 0}}
\sym_n\(\psi_p(E(w))\)
\sum_{t =1}^T \frac{1}{p-1}  \sum_{s=0}^{p-2}\e\( \frac{s(\ind w-dt)}{p-1}\).
\end{split}
\end{equation*}
Writing $\chi_s(w) = \e\(  s \ind w/(p-1)\)$
and changing the order of summation we obtain
\begin{equation*}
\begin{split}
& \sum_{\substack{t =1 \\
\Delta(\lambda^t) \not\equiv 0 \pmod p}}^T
\sym_n\(\psi_p(E(\lambda^t))\)\\
& \qquad \quad  =
 \frac{1}{p-1}  \sum_{s=0}^{p-2}\sum_{\substack{w \in \F_p \\
\Delta(w) \ne 0}}
\sym_n\(\psi_p(E(w))\) \chi_s(w)
\sum_{t =1}^T\e\( -st/r\).
\end{split}
\end{equation*}
It is easy to check that $\chi_s(w)$ is a multiplicative character
of $\F_p^*$ for any $0\le s \le p-2$. Thus by Lemma~\ref{lem:Mich bound},
$$
\sum_{\substack{t =1 \\
\Delta(\lambda^t) \not\equiv 0 \pmod p}}^T
\sym_n\(\psi_p(E(\lambda^t))\)  \ll
n   p^{-1/2}   \sum_{s=0}^{p-2}\left|
\sum_{t =1}^T\e\( st/r\)\right|.
$$
Using~\cite[Equation~(8.6)]{IwKow}, we know that if $r \nmid s$, we have
$$
\left|
\sum_{t =1}^T\e\( st/r\)\right| \le \frac{1}{2 \| s/r\|},
$$
where $\| s/r\|$ denotes the distance of $s/r$ to the nearest integer. The result now follows. 
\end{proof}

Finally we need the following slight generalisation of~\cite[Lemma~10]{Shp3}, which  is   based on
Lemma~\ref{lem:Mich bound0} and the same standard reduction
between complete and incomplete sums
(see~\cite[Section~12.2]{IwKow}) as we used in the proof of Lemma~\ref{lem:IncompSubgr}. 

\begin{lemma}
\label{lem:Interv}
If the polynomials $f(Z), g(Z) \in \Z[Z]$
satisfy~\eqref{eq:Nondeg}, then
for any prime $p$, any integers $M,N\ge 1$  and $k$ with $\gcd(k,p)=1$,   we have
$$
\sum_{\substack{m = M+1\\
\Delta(km) \not \equiv 0 \pmod p}}^{M+N}
\sym_n\(\psi_p(E(km))\)  \ll
n \(N p^{-1/2} + p^{1/2} \log p\)  ,
$$
uniformly over all integers $n\ge 1$.
\end{lemma}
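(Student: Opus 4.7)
The plan is to derive Lemma~\ref{lem:Interv} from Lemma~\ref{lem:Mich bound0} by the standard reduction of incomplete sums to complete sums via additive character orthogonality, essentially following the argument sketched for Lemma~\ref{lem:IncompSubgr}.

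First I would use that the elements of $\F_p$ are represented by $\{0,\ldots,p-1\}$ together with the orthogonality of $\ep$ to write
$$
\sum_{\substack{m=M+1\\ \Delta(km)\not\equiv 0\!\!\!\!\pmod p}}^{M+N}\!\!\!\!\sym_n\bigl(\psi_p(E(km))\bigr)
=\frac{1}{p}\sum_{a=0}^{p-1}\ \sum_{\substack{w\in\F_p\\ \Delta(w)\ne 0}}\sym_n\bigl(\psi_p(E(w))\bigr)\ep(-aw)\sum_{m=M+1}^{M+N}\ep(akm).
$$
I would then split into the contribution of $a=0$ and $a\ne 0$.

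For $a=0$ the inner geometric sum is $N$, and the sum over $w$ is bounded by $O(n\sqrt{p})$ by Lemma~\ref{lem:Mich bound0} applied with $m=0$. This gives a contribution of $O\bigl(nN p^{-1/2}\bigr)$.

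For $a\ne 0$ the sum $\sum_w\sym_n(\psi_p(E(w)))\ep(-aw)$ is again $O(n\sqrt{p})$ by Lemma~\ref{lem:Mich bound0} (here using that $a\ge 1$, or equivalently that $-a$ reduces to a positive residue since the character only depends on $a\bmod p$). For the geometric sum I would use the standard bound
$$
\Bigl|\sum_{m=M+1}^{M+N}\ep(akm)\Bigr|\ll\min\!\bigl(N,\,1/\|ak/p\|\bigr),
$$
and since $\gcd(k,p)=1$ the values $ak\bmod p$ run over $1,\ldots,p-1$ as $a$ does, so $\sum_{a=1}^{p-1}\min(N,1/\|ak/p\|)\ll p\log p$. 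Combining, the $a\ne 0$ part contributes $\frac{1}{p}\cdot O(n\sqrt{p})\cdot O(p\log p)=O(n\sqrt{p}\log p)$.

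Adding the two contributions yields the claimed bound $O\bigl(n(Np^{-1/2}+p^{1/2}\log p)\bigr)$. There is no real obstacle here; the only point that requires a word of care is verifying that Lemma~\ref{lem:Mich bound0} applies uniformly for every $a\in\{0,1,\ldots,p-1\}$ (including $a=0$, which is exactly why the hypothesis $m\ge 0$ rather than $m\ge 1$ was recorded in that lemma), and that the coprimality $\gcd(k,p)=1$ guarantees that multiplication by $k$ is a bijection on $(\Z/p\Z)^*$ so the standard logarithmic bound on $\sum_a 1/\|ak/p\|$ is not affected.
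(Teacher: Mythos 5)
Your proof is correct and is exactly the argument the paper intends: the text preceding Lemma~\ref{lem:Interv} states that it follows from Lemma~\ref{lem:Mich bound0} together with the standard completion technique of~\cite[Section~12.2]{IwKow}, and you have carried out that reduction faithfully, including the two small points (taking $m=p-a$ in Lemma~\ref{lem:Mich bound0} for $a\ne 0$, and using $\gcd(k,p)=1$ so that multiplication by $k$ permutes the nonzero residues). Nothing to add.
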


\subsection{Bounds on some  bilinear sums}
\label{sec:Bilin}
The following bound of bilinear sums with ``weights'' is a direct
application of Lemma~\ref{lem:Mich bound}.

\goodbreak
\begin{lemma}
\label{lem:Bilin}
If the polynomials $f(Z), g(Z) \in \Z[Z]$
satisfy~\eqref{eq:Nondeg},
then for any prime $p$, any $U,V\ge 1$ and non-empty  sets of integers
$\cU \subseteq [1,U]$,  $\cV \subseteq [1,V]$ with
$\gcd(uv,p)=1$ for $u \in \cU$, $v \in \cV$,
and two sequences
of complex numbers
$\{\alpha_u\}_{u \in \cU}$ and  $\{\beta_v\}_{v \in \cV}$  with
$$
\max_{u \in \cU} |\alpha_u| = A \mand \max_{v \in \cV} |\beta_v| = B ,
$$
and for any integer $n \ge 1$,  we have
\begin{equation*}
\begin{split}
\ssum_{\substack{u\in \cU, v \in \cV\\
\Delta(uv) \not \equiv 0 \!\!\!\!\! \pmod p}}
\alpha_u \beta_v &\sym_n\(\psi_p(E(uv))\)\\
& \ll n AB  \sqrt{  \# \cU (U/p+1) \# \cV (V/p+1) p}.
\end{split}
\end{equation*}
\end{lemma}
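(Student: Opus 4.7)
The plan is to linearise the multiplicative coupling $uv$ via multiplicative characters modulo $p$, invoke Lemma~\ref{lem:Mich bound} on the resulting inner sum, and finish with Parseval's identity combined with a single application of Cauchy--Schwarz.

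First I would extend $\alpha_u$ and $\beta_v$ by zero outside $\cU$ and $\cV$, so that the sums effectively run over all $u\in[1,U]$ and $v\in[1,V]$ with $\gcd(uv,p)=1$. Using the orthogonality of the multiplicative characters $\chi$ of $\F_p^*$ to detect the congruence $uv\equiv w\pmod p$, and interchanging the order of summation, the bilinear sum rewrites as
\begin{equation*}
S = \frac{1}{p-1}\sum_{\chi}\overline{T_n(\chi)}\,A(\chi)B(\chi),
\end{equation*}
where $A(\chi)=\sum_u\alpha_u\chi(u)$, $B(\chi)=\sum_v\beta_v\chi(v)$, and
\begin{equation*}
T_n(\chi) = \sum_{\substack{w\in\F_p^* \\ \Delta(w)\not\equiv 0 \pmod p}} \sym_n\bigl(\psi_p(E(w))\bigr)\,\chi(w).
\end{equation*}
Lemma~\ref{lem:Mich bound} then yields the uniform estimate $|T_n(\chi)|\le (n+1)\deg\Delta\cdot\sqrt{p}\ll n\sqrt{p}$.

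Next, pulling this uniform estimate out of the $\chi$-sum and applying Cauchy--Schwarz reduces the task to bounding $\sum_{\chi}|A(\chi)|^2$ and $\sum_{\chi}|B(\chi)|^2$ separately. I would group the integers $u\in[1,U]$ by residue classes modulo $p$, writing $A(\chi)=\sum_{a\in\F_p^*}\chi(a)A_a$ with $A_a=\sum_{u\equiv a\,(\bmod\,p)}\alpha_u$, so that Parseval's identity on $\F_p^*$ gives $\sum_{\chi}|A(\chi)|^2=(p-1)\sum_{a}|A_a|^2$. The trivial bound $|A_a|\le A\,m_a$, where $m_a=\#\{u\in\cU:u\equiv a\pmod p\}\le U/p+1$, combined with $\sum_a m_a=\#\cU$, yields $\sum_a|A_a|^2\le A^2(U/p+1)\#\cU$; the symmetric argument for $B$ produces the analogous bound $B^2(V/p+1)\#\cV$. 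Multiplying the pieces together gives precisely the estimate claimed in the lemma.

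The argument is essentially mechanical once the orthogonality expansion has been carried out, and there is no substantial obstacle beyond careful bookkeeping. The one point that requires a little attention is the passage from integer coefficients $\alpha_u$ indexed on $[1,U]$ to the character-theoretic $A_a$ on $\F_p^*$, which is exactly where the factor $U/p+1$ (and its $V$-analogue) enter via the pigeonhole estimate on $m_a$.
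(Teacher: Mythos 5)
Your proposal is correct and follows essentially the same route as the paper: orthogonality of multiplicative characters to decouple $u$ and $v$, Lemma~\ref{lem:Mich bound} to handle the complete sum over $w$, Cauchy--Schwarz, and then the orthogonality/large-sieve-type mean-value bound for $\sum_\chi|A(\chi)|^2$ and $\sum_\chi|B(\chi)|^2$. Your grouping of $\alpha_u$ by residue class and appeal to Parseval plus the pigeonhole bound $m_a\le U/p+1$ is just a reformulation of the paper's direct expansion $\sum_\chi|A(\chi)|^2=(p-1)\sum_{u_1\equiv u_2\ (\mathrm{mod}\ p)}\alpha_{u_1}\overline\alpha_{u_2}$, so the two proofs coincide in substance.
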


\begin{proof}
Let $\cX_p$ denote the set of all $p-1$ multiplicative characters of $\F_p$. We denote by $S$ the sum to be bounded.
Using the orthogonality of multiplicative characters and the fact that $\chi(w^{-1}) = \overline \chi(w)$ for the complex conjugated
character $ \overline \chi$, we write
\begin{equation*}
\begin{split}
S&=\frac{1}{p-1} \sum_{\chi \in \cX_p}
  \sum_{\substack{w \in \F_p\\
\Delta(w) \ne 0}}
%% \not \equiv 0 \pmod p}}&
 \sym_n\(\psi_p(E(w))\)  \overline \chi(w)    \sum_{u\in \cU}
\alpha_u \chi(u)
\sum_{v \in \cV}  \beta_v \chi(v).
\end{split}
\end{equation*}
Using Lemma~\ref{lem:Mich bound} and the Cauchy inequality, we have
\begin{equation}
\label{eq:Michel}
\begin{split}S
& \ll n p^{-1/2} \sum_{\chi \in \cX_p} \left| \sum_{u\in \cU}
\alpha_u \chi(u) \right|
\left|\sum_{v \in \cV}  \beta_v \chi(v)\right|.
 \\&  \ll n p^{-1/2} \left(\sum_{\chi \in \cX_p} \left| \sum_{u\in \cU}
\alpha_u \chi(u) \right|^2  \sum_{\chi \in \cX_p}
\left|\sum_{v \in \cV}  \beta_v \chi(v)\right|^2\right)^{1/2}.
\end{split}
\end{equation}
Applying the orthogonality of multiplicative characters again,
we derive
\begin{equation*}
\begin{split}
 \sum_{\chi \in \cX_p} \left| \sum_{u\in \cU}
\alpha_u \chi(u) \right|^2
&   = \sum_{\chi \in \cX_p}  \sum_{u_1,u_2\in \cU}
\alpha_{u_1} \overline\alpha_{u_2} \chi(u_1 u_2^{-1})  \\
&  =  (p-1) \sum_{\substack{u_1,u_2\in \cU\\ u_1 \equiv u_2 \pmod p}}
\alpha_{u_1} \overline\alpha_{u_2}.
\end{split}
\end{equation*} 
Hence,
\begin{equation}
\label{eq:Ort1}
 \sum_{\chi \in \cX_p} \left| \sum_{u\in \cU}
\alpha_u \chi(u) \right|^2  \le  (p-1) A^2 \# \cU (U/p+1).
\end{equation}
Similarly, we have
\begin{equation}
\label{eq:Ort2}
\sum_{\chi \in \cX_p}
\left|\sum_{v \in \cV}  \beta_v \chi(v)\right|^2 \le (p-1) B^2 \# \cV (V/p+1).
\end{equation}
Substituting~\eqref{eq:Ort1} and~\eqref{eq:Ort2} into~\eqref{eq:Michel}, we conclude the proof.
\end{proof}

Finally, we need the following modification of Lemma~\ref{lem:Bilin}.

\begin{lemma}
\label{lem:BilinGen}
If the polynomials $f(Z), g(Z) \in \Z[Z]$
satisfy~\eqref{eq:Nondeg},
then for any prime $p$, any integers
$U,V,W\ge 1$ with $U\ge W$ and $V\ge 2$, two sequences of
integers $\{W_u\}_{u=W}^U$ and $\{V_u\}_{u=W}^U$
with $1 \le W_u \le  V_u \le V$ for each $u$ and
two sequences  of complex numbers
$\{\alpha_u\}_{u=W}^U$ and  $\{\beta_v\}_{v=1}^V$  with
$$
\max_{u=W, \ldots, U} |\alpha_u| = A \mand \max_{v =1, \ldots, V} |\beta_v| = B ,
$$
and for any integer $n \ge 1$,  we have
\begin{equation*}
\begin{split}
\sum_{u=W}^U  \alpha_u&\sum_{\substack{ v =W_u\\
\Delta(uv) \not \equiv 0 \pmod p}}^{V_u}
\beta_v  \sym_n\(\psi_p(E(uv))\)\\
& \ll n AB \sqrt{ V (U-W+1) (U/p+1) (V/p+1) p} \log V.
\end{split}
\end{equation*}
\end{lemma}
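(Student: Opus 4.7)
The plan is to reduce this lemma to Lemma~\ref{lem:Bilin} by a Fourier completion argument that decouples the $u$\nobreakdash-dependent inner summation range $[W_u,V_u]$ from the trace factor $\sym_n(\psi_p(E(uv)))$. Concretely, I set $N=2V+1$, view the indicator $\mathbf{1}_{W_u\le v\le V_u}$ as a function on $\Z/N\Z$, and expand it in additive characters,
\begin{equation*}
\mathbf{1}_{W_u\le v\le V_u}=\sum_{h=0}^{N-1}c_h(u)\,\e(hv/N), \qquad \e(z)=\exp(2\pi i z).
\end{equation*}
A direct geometric-series computation yields $|c_0(u)|\le V/N\le 1$ and $|c_h(u)|\ll 1/\min(h,N-h)$ for $h\ne 0$, hence
\begin{equation*}
\sum_{h=0}^{N-1}\max_u|c_h(u)|\ll \log V.
\end{equation*}

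Substituting this expansion into the sum $\Sigma$ to be bounded and interchanging the orders of summation produces
\begin{equation*}
\Sigma=\sum_{h=0}^{N-1}\sum_{u=W}^{U}\alpha_u c_h(u)\sum_{\substack{v=1\\ \Delta(uv)\not\equiv 0 \pmod p}}^{V}\beta_v\,\e(hv/N)\,\sym_n(\psi_p(E(uv))).
\end{equation*}
For each fixed $h$, I apply Lemma~\ref{lem:Bilin} to the inner double sum with twisted weights $\widetilde\alpha_u=\alpha_u c_h(u)$ and $\widetilde\beta_v=\beta_v\e(hv/N)$. Since $|\e(hv/N)|=1$, the modified sizes are $\max_u|\widetilde\alpha_u|\le A\max_u|c_h(u)|$ and $\max_v|\widetilde\beta_v|\le B$, so each inner sum is bounded by
\begin{equation*}
\ll n A \max_u|c_h(u)|\, B\,\sqrt{(U-W+1)(U/p+1)V(V/p+1)p}.
\end{equation*}
Summing over $h$ and using the logarithmic $\ell^1$-bound on $\max_u|c_h(u)|$ from the first step produces the extra $\log V$ factor and reproduces the rest of the bound verbatim from Lemma~\ref{lem:Bilin}.

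The main technical nuisance is that Lemma~\ref{lem:Bilin} is stated under the hypothesis $\gcd(uv,p)=1$, whereas here the only multiplicative constraint is $\Delta(uv)\not\equiv 0 \pmod p$. I would address this by splitting off the at most $(U-W+1)(V/p+1)+(U/p+1)V$ pairs with $p\mid uv$ and bounding their contribution trivially via $|\sym_n|\le n+1$; a short AM--GM argument shows that this remainder is absorbed into the claimed main term up to the implied $\ll$ constant. Nothing else is delicate; in particular the additive-character twist of $\beta_v$ never interacts with the multiplicative-character analysis underlying Lemma~\ref{lem:Bilin}, which is precisely why only a single $\log V$ factor appears in the final estimate.
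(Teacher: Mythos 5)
Your argument is essentially the paper's own proof: both complete the $u$-dependent range $[W_u,V_u]$ via additive characters (the paper uses modulus $V$ with the bound $\sum_{-V/2<s\le V/2}(|s|+1)^{-1}\ll\log V$, you use modulus $2V+1$ with the $\ell^1$ bound on Fourier coefficients), fold the completion coefficient into the $\alpha$-weights, and apply Lemma~\ref{lem:Bilin} for each frequency, after disposing of the $p\mid uv$ terms trivially via $|\sym_n|\le n+1$. The only presentational difference is that the paper removes the $p\mid uv$ terms at the outset by setting $\alpha_u=0$ (resp.\ $\beta_v=0$) when $p\mid u$ (resp.\ $p\mid v$), which is slightly cleaner than splitting them off at the end, but the estimates are identical.
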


\begin{proof}
First note that
 for any $n \ge 1$, we have 
\begin{equation}
\label{eq:symn}
|\sym_n (\vartheta)| \leq n+1.
\end{equation}
Hence, the contribution from the terms with $p\mid uv$ is at most
$$
2(n+1)ABV(1+(U-W+1)/p),
$$
which is not greater than the desired upper bound. Thus, we can assume that $\alpha_u = 0$ if $p\mid u$, and
$\beta_v = 0$ if $p\mid v$.

We define  $\eV(z) = \exp(2 \pi i z/V)$. Then, for each
inner sum, using the orthogonality of exponential
functions,  we write
\begin{equation*}
\begin{split}
\sum_{\substack{v =W_u\\
\Delta(uv) \not \equiv 0 \pmod p}}^{V_u}&
\beta_v  \sym_n\(\psi_p(E(uv))\)\\
&=\sum_{\substack{ v =1\\
\Delta(uv) \not \equiv 0 \pmod p}}^V  \beta_v  \sym_n\(\psi_p(E(uv))\)\\
& \qquad \qquad \quad \sum_{w =W_u}^{V_u}  \frac{1}{V} \sum_{-V/2<s\le V/2}
\eV(s(v-w))\\
&=\frac{1}{V} \sum_{-V/2<s\le V/2}
\sum_{w =W_u}^{V_u}  \eV(-sw)\\
& \qquad \qquad \quad
\sum_{\substack{ v =1\\\Delta(uv) \not \equiv 0 \pmod p}}^V
\beta_v \eV(sv) \sym_n\(\psi_p(E(uv))\) .
\end{split}
\end{equation*}
In view of~\cite[Equation~(8.6)]{IwKow}, for each $u =1, \ldots, U$
and every integer $s$ such that $|s|\le V/2$ we can write
$$
\sum_{w =W_u}^{V_u}  \eV(-sw)
=\sum_{w =1}^{V_u}  \eV(-sw)
-\sum_{w =1}^{W_u-1}  \eV(-sw)
=\eta_{s,u}\frac{V}{|s|+1}
$$
for some complex number $\eta_{s,u}\ll 1$.  Thus, if we put
$\widetilde\alpha_{s,u}=\alpha_u \eta_{s,u}$ and
$\widetilde\beta_{s,v}=\beta_v \eV(sv)$,
it follows that
\begin{equation*}
\begin{split}
\sum_{u=1}^U & \alpha_u\sum_{\substack{ v =W_u\\
\Delta(uv) \not \equiv 0 \pmod p}}^{V_u}
\beta_v  \sym_n\(\psi_p(E(uv))\)\\
& = \sum_{-V/2<s\le V/2}\frac{1}{|s|+1}
\sum_{u=W}^U  \sum_{\substack{ v = 1\\
\Delta(uv) \not \equiv 0 \pmod p}}^{V }
\widetilde \alpha_{s,u} \widetilde \beta_{s,v} \sym_n\(\psi_p(E(uv))\).
\end{split}
\end{equation*}
Applying Lemma~\ref{lem:Bilin} with the sequences
$(\widetilde\alpha_{s,u})_{u=W}^U$ and $(\widetilde\beta_{s,v})_{v=1}^V$ for each $s$,
and noting that
$$
\sum_{-V/2<s\le V/2}\frac{1}{|s|+1}\ll\log V
$$
we derive the desired upper bound.
\end{proof}

We are now ready to establish our main technical tool,  which
gives a bound of bilinear sums   over a certain ``hyperbolic''
region of summation.

\begin{lemma}
\label{lem:BilinHyperb}
If the polynomials $f(Z), g(Z) \in \Z[Z]$
satisfy~\eqref{eq:Nondeg},
then for any prime $p$, any integers
$U,V, W\ge 1$, a sequence  of
integers $\{Z_u\}_{u=1}^U$
with
$$
1 \le W \le U, \quad U\ge 2 \mand  1 \le Z_u < V, \ u = W, \ldots,  U,
$$
and two sequences
of complex numbers
$\{\alpha_u\}_{u=W}^U$ and  $\{\beta_v\}_{v=1}^V$  with
$$
\max_{u=W, \ldots, U} |\alpha_u| = A \mand \max_{v =1, \ldots, V} |\beta_v| = B ,
$$
and for any integer $n \ge 1$,  we have
\begin{equation*}
\begin{split}
\sum_{W \le u \le U} &  \alpha_u  \sum_{\substack{Z_u\le v \le V/u \\
\Delta(uv) \not \equiv 0 \pmod p}}
\beta_v  \sym_n\(\psi_p(E(uv)\)\\
& \ll
n AB \(V p^{-1/2} + V W^{-1/2} +  (U  V)^{1/2}  + (V  p)^{1/2}\) \log U \log V.
\end{split}
\end{equation*}
\end{lemma}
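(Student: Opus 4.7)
The plan is to reduce this hyperbolic sum to the rectangular estimate of Lemma~\ref{lem:BilinGen} by a standard dyadic decomposition of the outer variable $u$. I would partition the range $W \le u \le U$ into $O(\log U)$ sub-intervals of the form $\cU_j = [U_j, \min(2U_j, U))$ with $U_j = 2^j W$, $j = 0,1,\ldots,\lceil\log_2(U/W)\rceil$. On each such piece, the constraint $v \le V/u$ forces $v \le V/U_j$; accordingly, the inner summation fits into the fixed range $\{1, \ldots, \lfloor V/U_j \rfloor\}$, and the $u$-dependent endpoints $Z_u$ and $\lfloor V/u \rfloor$ play the roles of $W_u$ and $V_u$ in Lemma~\ref{lem:BilinGen}.

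Next, applying Lemma~\ref{lem:BilinGen} to the restricted sum $S_j$ over $\cU_j$ with parameters $W' = U_j$, $U' = \min(2U_j, U)$, and $V' = \lfloor V/U_j \rfloor$, and using $U' - W' + 1 \le U_j$, I obtain
$$
S_j \ll n A B \sqrt{(V/U_j) \cdot U_j \cdot (U_j/p + 1)(V/(U_j p) + 1)\, p} \, \log V.
$$
Expanding the product inside the square root as
$V p + V U_j + V^2/U_j + V^2/p$
and taking square roots term by term yields
$$
S_j \ll n A B \left((V p)^{1/2} + (V U_j)^{1/2} + V U_j^{-1/2} + V p^{-1/2}\right) \log V.
$$

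Finally I would sum $S_j$ over the $O(\log U)$ dyadic indices. The contributions $(Vp)^{1/2}$ and $V p^{-1/2}$ are independent of $U_j$, so summing picks up an extra $\log U$ factor. The growing summand $(VU_j)^{1/2}$ is a geometric progression in $j$ dominated by its largest term, namely $(VU)^{1/2}$; the decaying summand $V U_j^{-1/2}$ is a geometric progression dominated by its smallest-index term $V W^{-1/2}$. Absorbing everything under a common $\log U \log V$ factor produces precisely the four-term bound claimed.

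There is no genuine analytic obstacle here: the Weil-type input is already packaged inside Lemma~\ref{lem:BilinGen} (via Lemma~\ref{lem:Mich bound}). The only delicacy is book-keeping, and in particular verifying that the hyperbolic constraint $uv \le V$ is indeed controlled block-by-block by the bound $v \le V/U_j$, and that the geometric sums in $U_j$ are correctly dominated by their extremal values rather than contributing further logarithmic losses on the two monotone terms.
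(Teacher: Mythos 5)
Your proof is correct and follows essentially the same route as the paper: both decompose the range of $u$ dyadically (you use base $2$, the paper uses base $e$), apply Lemma~\ref{lem:BilinGen} on each block with the inner range cut to $V' \asymp V/U_j$, expand the product under the square root into four terms, and control the two monotone geometric series by their extreme terms while the two $j$-independent terms absorb the extra $\log U$. The bookkeeping you describe, including the bound $U'-W'+1 \ll U_j$ and the domination $(VU_j)^{1/2} \ll (VU)^{1/2}$ and $VU_j^{-1/2} \ll VW^{-1/2}$, is exactly what the paper does.
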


\begin{proof}
Note that the desired upper bound is better than the direct consequence of Lemma~\ref{lem:BilinGen}.

Let $\alpha_u=0$ if $u < W$ or $u>U$. We also set
$ I = \fl{\log W} $  and $ J = \fl{\log U}
$,
and consider the half-open intervals
$$
\cI_j = [\exp(j), \exp(j+1)), \qquad ( I\le j\le J).
$$
Then,
\begin{equation*}
\begin{split}
\sum_{W \le u \le U}  \alpha_u&\sum_{\substack{Z_u\le v \le V/u \\
\Delta(uv) \not \equiv 0 \pmod p}}
\beta_v  \sym_n\(\psi_p(E(uv))\)
\\ & \qquad =
\sum_{j=I}^J\sum_{u \in \cI_j}
\sum_{\substack{Z_u\le v \le V/u \\
\Delta(uv) \not \equiv 0 \pmod p}}
\alpha_u \beta_v \sym_n\(\psi_p(E(uv))\).
\end{split}
\end{equation*}
Using Lemma~\ref{lem:BilinGen}, each inner double sum
satisfies the bound
\begin{equation*}
\begin{split}
\sum_{u \in \cI_j}&
\sum_{\substack{Z_u\le v \le V/u \\
\Delta(uv) \not \equiv 0 \pmod p}}
\alpha_u \beta_v \sym_n\(\psi_p(E(uv))\)\\
&\quad\ll n AB
\sqrt{ V  (\exp(j)/p+1) (V\exp(-j)/p+1) p} \log V\\
&\quad\ll n AB   \sqrt{V^2p^{-1} + V^2\exp(-j) + V  \exp(j)  + V  p}\log V.
\end{split}
\end{equation*}
Summing over $j \in [I, J]$,  we conclude the proof.
\end{proof}

\subsection{Vaughan's Identity}
\label{sec:Vaugh}

As usual, we use $\mu(d)$  to denote the M{\"o}bius function and
$\Lambda$  to denote the von~Mangoldt function given by
$$
\Lambda(t)=
\begin{cases}
\log \ell &\quad\text{if $t$ is a power of some prime $\ell$,} \\
0&\quad\text{if $t$ is not a prime power.}
\end{cases}
$$

We need the following result of Vaughan~\cite{Vaughan,Vau}, which is stated
here in the form given in~\cite[Chapter~24]{Dav} (see also~\cite[Section~13.4]{IwKow}).

\begin{lemma}
\label{lem:Vau}
For any complex-valued function $\psi(t)$ and any real numbers $K,M\ge 1$
with $KM\le L$ and $L\ge 2$, we have
$$
\sum_{t=1}^L\Lambda(t)\psi(t)\ll\Sigma_1+\Sigma_2 \log (KM)+\Sigma_3 \log L+\Sigma_4,
$$
where
\begin{equation*}
\begin{split}
\Sigma_1&=\left|\,\sum_{t\le M}\Lambda(t)\psi(t)\right|,\\
\Sigma_2&=\sum_{k \le KM}\left|\,\sum_{m\le L/k}\psi(k
m)\right| ,\\
\Sigma_3&=\sum_{k\le K}\,\max_{w\ge
1}\Bigg| \sum_{w\le m\le L/k}\psi(k m)\Bigg|,\\
\Sigma_4&=\Bigg| \sum_{M<m \le L/K}
\Lambda(m) \sum_{K<k \le L/m}
\Bigg( \sum_{\substack{d\,\mid\,k\\d\le
K}}\mu(d) \Bigg) \psi(km)\Bigg|.
\end{split}
\end{equation*}
\end{lemma}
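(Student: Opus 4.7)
The plan is to invoke Vaughan's classical combinatorial identity for $\Lambda$. Writing $F(s) := \sum_{m \le M}\Lambda(m) m^{-s}$ and $G(s) := \sum_{k \le K}\mu(k) k^{-s}$, one starts from the elementary operator identity
$$-\frac{\zeta'}{\zeta}(s) = F(s) - \zeta(s)F(s)G(s) - \zeta'(s)G(s) + \left(-\frac{\zeta'}{\zeta}(s) - F(s)\right)\left(1 - \zeta(s)G(s)\right),$$
which is verified by pure algebra. Extracting the coefficient of $n^{-s}$ gives, for every $n \ge 1$, a decomposition $\Lambda(n) = a_1(n) + a_2(n) + a_3(n) + a_4(n)$, where $a_1(n) = \Lambda(n)\mathbf{1}_{n \le M}$, $a_2(n) = -\sum_{n = kmh,\ k\le K,\ m\le M}\mu(k)\Lambda(m)$, $a_3(n) = \sum_{k\mid n,\ k\le K}\mu(k)\log(n/k)$, and $a_4(n)$ is the convolution produced by the final bracket.

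The decisive structural point is that the $n^{-s}$-coefficient sequence of $-\zeta'/\zeta - F$ vanishes for $n \le M$, while that of $1 - \zeta G$ vanishes for $n \le K$ (since $\sum_{d\mid n,\ d\le K}\mu(d) = \sum_{d\mid n}\mu(d) = \mathbf{1}_{n=1}$ when $n \le K$). Hence $a_4(n)$ is supported on factorisations $n = km$ with $k > K$ and $m > M$, and expanding the convolution shows that $a_4(n) = -\sum_{n = km,\ m > M,\ k > K}\Lambda(m)\sum_{d\mid k,\ d\le K}\mu(d)$. Consequently, multiplying by $\psi(n)$, summing over $n \le L$ and taking absolute values bounds the $a_4$ contribution by exactly $\Sigma_4$.

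For the other pieces the matching is routine. The $a_1$ contribution is literally $\Sigma_1$. For $a_2$, grouping terms by $j := km \le KM$ gives $-\sum_{j\le KM} c_j \sum_{h\le L/j}\psi(jh)$ with coefficients $|c_j| \le \sum_{m\mid j}\Lambda(m) = \log j \le \log(KM)$, yielding a bound by $\Sigma_2 \log(KM)$. For $a_3$, writing $\log(n/k) = \int_1^{n/k}du/u$ and swapping the orders of integration and summation expresses the inner sum as a weighted integral of incomplete sums of the form $\sum_{w \le m \le L/k}\psi(km)$; integrating the $1/u$ weight against $L/k$ yields a total factor of $\log L$, producing the bound $\Sigma_3 \log L$. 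The only obstacle is bookkeeping: one must track signs carefully and confirm the two support properties that underlie the Type II structure captured by $\Sigma_4$. No analytic input beyond these elementary identities is needed, which is why the proof fits on a single page in Davenport~\cite{Dav}.
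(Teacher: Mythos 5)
Your proof is correct, and it is precisely the classical argument from Davenport, Chapter~24, which is the exact source the paper cites for this lemma (the paper states it without proof). The Dirichlet-series identity is verified by elementary algebra, the support observations — the coefficients of $-\zeta'/\zeta - F$ vanish for $n \le M$ and those of $1 - \zeta G$ vanish for $n \le K$ — are the key structural points, and your bookkeeping turning the four pieces into $\Sigma_1,\dots,\Sigma_4$ (including the $\log(KM)$ factor from $|c_j| \le \log j$ and the $\log L$ factor from writing $\log m = \int_1^m du/u$ and swapping orders) matches the standard derivation exactly.
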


So, Lemma~\ref{lem:Vau} reduces the problem of estimating the
sums over primes to sums over consecutive integers
and bilinear sums, which for the function $\sym_n$ are available
from Sections~\ref{sec:Sing} and~\ref{sec:Bilin}.

\subsection{Bounds on some sums over primes}
\label{sec:Prime}

We start with showing that~\cite[Theorem~1.5]{FKM}  applies to the functions
$\sym_n\(\psi_p(E(t))\)$ for every $n \ge 1$ and leads to the following 
bound:

\begin{lemma}
\label{lem:Prime1}
For any fixed prime $p$ and $0<\eta < 1/48$,
and  polynomials $f(Z), g(Z) \in \Z[Z]$ that 
satisfy~\eqref{eq:Nondeg p}, 
we have
$$ \sum_{\substack{\textrm{prime } \ell \le L\\
\Delta(\ell) \not\equiv 0\!\!\!\!\! \pmod p}}
\sym_n\(\psi_p(E(\ell))\)    \ll n^A  \pi(L) \( 1  + p/L\)^{1/12} p^{-\eta} 
$$
for any integer $n \ge 1$, 
where the implied constant and the constant $A \ge 1$  depend only on
$f(Z)$, $g(Z)$ and $\eta$. 
\end{lemma}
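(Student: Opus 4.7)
The plan is to interpret $\sym_n(\psi_p(E(\ell)))$ as the trace function at $\ell$ of a suitable lisse $\ell$-adic sheaf on $\A^1_{\F_p}$ and then to invoke~\cite[Theorem~1.5]{FKM} on sums of trace functions over primes.

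More specifically, I would reuse the geometric setup from the proof of Lemma~\ref{lem:Mich bound}: let $U=\A^1-\{Z\Delta(Z)=0\}$ and set $\cF=R^1\pi_!\Q_\ell(1/2)$, where $\pi: \mathcal{E}\to U$ is the elliptic surface defined by~\eqref{eq:Family AB}. The sheaf $\cF$ is lisse of rank two and pointwise pure of weight zero; the key identity is that for every $t\in U(\F_p)$ the trace function of $\Sym_n(\cF)$ at $t$ coincides with $\sym_n(\psi_p(E(t)))$, via the Chebyshev relation $U_n(\cos\vartheta)=\sin((n+1)\vartheta)/\sin\vartheta$ and the multiplicativity of characteristic polynomials under symmetric powers.

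Next, I would verify the hypotheses of~\cite[Theorem~1.5]{FKM}. Condition~\eqref{eq:Nondeg p} ensures that $j(Z)$ is non-constant modulo $p$, hence the family is non-isotrivial; consequently the geometric monodromy of $\cF$ on $U$ is all of $\SL_2$, so $\Sym_n(\cF)$ is geometrically irreducible of rank $n+1$ and is not geometrically isomorphic to a Kummer or Artin--Schreier sheaf for any $n\ge 1$. The (analytic) conductor of $\Sym_n(\cF)$ in the sense of~\cite{FKM} is controlled by its rank $n+1$, by the number of singularities on $\P^1$ (at most $\deg\Delta+1$), and by the Swan conductors at those points, which are bounded by a constant times the rank since $\cF$ is tame at finite distance and its wild behaviour at infinity is controlled. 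Altogether this yields a conductor bound of size $O_{f,g}(n)$.

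Finally, inserting these data into~\cite[Theorem~1.5]{FKM} produces a bound of the shape $n^A\pi(L)(1+p/L)^{1/12}p^{-\eta_0}$ for some explicit $\eta_0>0$; the stated range $0<\eta<1/48$ then follows after absorbing polylogarithmic losses, and the factor $n^A$ reflects the polynomial dependence of the FKM bound on the conductor of $\Sym_n(\cF)$. The main obstacles I anticipate are twofold. First, rigorously establishing that the geometric monodromy of $\cF$ equals $\SL_2$---standard for non-isotrivial elliptic families, but something that needs to be carried out under the specific hypothesis~\eqref{eq:Nondeg p} rather than invoked as a black box. Second, tracking how the bound in~\cite[Theorem~1.5]{FKM} depends on the sheaf conductor, in order to obtain the explicit polynomial dependence $n^A$ rather than a less tractable function of $n$.
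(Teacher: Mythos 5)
Your proposal follows essentially the same route as the paper: interpret $\sym_n(\psi_p(E(\cdot)))$ as the trace function of $\Sym_n(\cF)$ for the sheaf $\cF=R^1\pi_!\Q_\ell(1/2)$ from Lemma~\ref{lem:Mich bound}, check it is not exceptional via the monodromy of $\cF$ (the paper cites \cite[Lemma~3.1]{Mich} for non-abelianness rather than spelling out the full $\SL_2$ computation), bound the conductor by $O_{f,g}(n)$, and feed this into \cite[Theorem~1.5]{FKM}. The only point you pass over that the paper makes explicit is that the FKM bound has a factor $X$ rather than $\pi(X)$, and one needs a short partial-summation argument (using the stronger estimate on their p.~1714) to replace it by $\pi(L)$ as in the statement; you should also note, as the paper does in a remark after the lemma, that the polynomial dependence $n^A$ on the conductor is not made explicit in~\cite{FKM}, which is precisely your second anticipated obstacle.
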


\begin{proof}
We first remark that in the result of \cite[Equation~(1.3) of Theorem~1.5]{FKM}, the factor $X$ can be replaced by $\pi(X)$. This comes from the bound in \cite[page 1714]{FKM}
$$
{\mathcal S}_{V,X}(\Lambda, K)\ll (pQ)^{\varepsilon} QXp^{-\eta}
$$
and an integration by parts. 

Now, we wish to apply~\cite[Equation~(1.3) of Theorem~1.5]{FKM} to the trace weight
$K(\ell) = \sym_n\(\psi_p(E(\ell))\)$ associated to
the sheaf $\Sym_n(\cF)$ considered in the proof of Lemma~\ref{lem:Mich bound} 
(and in~\cite{Mich}),
where $\eta$ here corresponds to $\eta/2$ in \cite{FKM}. We need to verify that this
sheaf is not exceptional in the sense of \cite{FKM} and estimate its conductor.
The sheaf is not exceptional because its monodromy is not abelian, as seen in the
proof of~\cite[Lemma~3.1]{Mich}. The conductor is the dimension of the $H^1$
which has been estimated in the proof of Lemma~\ref{lem:Mich bound} and is linear in $n$.
The result now follows.
\end{proof}

\begin{rem}
{\rm
Although this is not explicitly stated  in~\cite{FKM}, the constant 
$A$ seems to be absolute (and in fact of very moderate value). If it is worked out 
explicitly then Theorem~\ref{thm:S-T Primes 1} can also be made more explicit. 
}
\end{rem}

We now use a different method to bound the sums in Lemma \ref{lem:Prime1} which is more efficient for $L \ge p$. 
First we estimate the sums weighted by the  von~Mangoldt function.

\begin{lemma}
\label{lem:Lambda}
 If the polynomials $f(Z), g(Z) \in \Z[Z]$
satisfy~\eqref{eq:Nondeg}, then for any prime $p$, and any integers $n \ge 1, L\ge 2$,
we have
\begin{equation*}
\begin{split}
 \sum_{\substack{1 \le t \le L\\
\Delta(t) \not \equiv 0 \pmod p}} \Lambda(t) &
\sym_n\(\psi_p(E(t))\)  \\
& \ll n  \( L p^{-1/2} + L^{5/6}  + L^{1/2} p^{1/2} \)   L^{c/\log\log L},
\end{split}
\end{equation*}
for some absolute constant $c>0$.
\end{lemma}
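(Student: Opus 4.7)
The plan is to apply Vaughan's identity (Lemma~\ref{lem:Vau}) to the weight $\psi(t) = \sym_n(\psi_p(E(t)))$, extended by $\psi(t) = 0$ whenever $\Delta(t) \equiv 0 \pmod p$, with the balanced choice of parameters $K = M = \fl{L^{1/3}}$. This decomposes the target sum into the four pieces $\Sigma_1, \Sigma_2, \Sigma_3, \Sigma_4$ of Lemma~\ref{lem:Vau}, and the goal is to show each of them falls within $n(Lp^{-1/2} + L^{5/6} + L^{1/2} p^{1/2}) L^{c/\log\log L}$.

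The sums $\Sigma_1$ and $\Sigma_3$ are the easy pieces. From the trivial bound $|\sym_n| \le n + 1$ one gets $\Sigma_1 \ll n M \ll n L^{1/3}$, while Lemma~\ref{lem:Interv} applied to each inner sum of $\Sigma_3$ gives $\Sigma_3 \ll n(Lp^{-1/2} + L^{1/3} p^{1/2})(\log L)(\log p)$, which is acceptable because $L^{1/3} p^{1/2} \le L^{1/2} p^{1/2}$. For $\Sigma_2 = \sum_{k\le KM}\bigl|\sum_{m \le L/k}\psi(km)\bigr|$, I would split at $k_0 = \fl{L^{1/3}}$: the contribution of $k \le k_0$ is handled termwise by Lemma~\ref{lem:Interv}, producing $n(Lp^{-1/2}\log L + L^{1/3} p^{1/2}\log p)$, while for $k_0 < k \le L^{2/3}$ I would insert unimodular $\epsilon_k$ so that $|\sum_m\psi(km)| = \epsilon_k \sum_m \psi(km)$ and recognise the result as a bilinear sum over the hyperbolic region $\{(k,m) : k_0 < k \le L^{2/3},\ m \le L/k\}$. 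Lemma~\ref{lem:BilinHyperb} applied with $u = k$, $v = m$, $W = k_0$, $U = L^{2/3}$, $V = L$, $Z_u = 1$, $\alpha_u = \epsilon_u$, $\beta_v = 1$ then yields $n(Lp^{-1/2} + L^{5/6} + L^{5/6} + L^{1/2}p^{1/2})\log^2 L$.

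The sum $\Sigma_4$ is already a bilinear sum in the natural variables $u = m \in (M, L/K]$ and $v = k \in (K, L/m]$. I would apply Lemma~\ref{lem:BilinHyperb} directly with $W = M$, $U = L/K$, $V = L$, $Z_u = K$, $\alpha_u = \Lambda(u)$ (so $A \le \log L$) and $\beta_v = \sum_{d \mid v,\ d \le K}\mu(d)$; the classical divisor bound $|\beta_v| \le \tau(v) \ll L^{c_0/\log\log L}$ then gives $B \ll L^{c_0/\log\log L}$. With the chosen $K = M = L^{1/3}$ the four terms $Lp^{-1/2}$, $LM^{-1/2}$, $(UV)^{1/2} = L/K^{1/2}$ and $(Lp)^{1/2}$ of Lemma~\ref{lem:BilinHyperb} evaluate to $Lp^{-1/2}$, $L^{5/6}$, $L^{5/6}$ and $L^{1/2} p^{1/2}$, and the full estimate takes the claimed shape once the residual powers of $\log L$ and the divisor factor are absorbed into $L^{c/\log\log L}$ for a suitably enlarged constant $c$.

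The main difficulty lies in the parameter optimisation. The bilinear estimate for $\Sigma_4$ forces both $K \ge L^{1/3}$ and $M \ge L^{1/3}$ (otherwise either $LK^{-1/2}$ or $LM^{-1/2}$ would exceed $L^{5/6}$), and hence $KM \ge L^{2/3}$; a naive estimate on $\Sigma_2$ that only uses Lemma~\ref{lem:Interv} would then produce a term $KMp^{1/2} = L^{2/3}p^{1/2}$, which exceeds $L^{5/6} + L^{1/2}p^{1/2}$ in the intermediate regime $L^{1/3} \ll p \ll L^{2/3}$. The bilinear treatment of the large-$k$ range of $\Sigma_2$ is what rescues the argument: it replaces the offending $L^{2/3}p^{1/2}$ by the acceptable $(KM \cdot L)^{1/2} = L^{5/6}$, and this in turn pins down the symmetric choice $K = M = L^{1/3}$ as essentially the unique balanced one.
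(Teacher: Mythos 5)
Your proposal is correct and follows essentially the same route as the paper: Vaughan's identity with the balanced choice $K=M=L^{1/3}$, Lemma~\ref{lem:Interv} for the Type~I pieces (small $k$) and Lemma~\ref{lem:BilinHyperb} for the Type~II piece $\Sigma_4$ and the large-$k$ part of $\Sigma_2$. The only cosmetic difference is that the paper splits $\Sigma_2$ at the optimised threshold $R=L^{2/3}p^{-1/3}$ rather than at $L^{1/3}$ (both yield the same bound since $p\le L$), and you should note in passing that the inner-sum estimate via Lemma~\ref{lem:Interv} requires $\gcd(k,p)=1$, with the $p\mid k$ terms handled trivially --- a routine point which the paper addresses explicitly.
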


\begin{proof} 
We remark that the trivial upper bound on the above sum is $O(nL)$,
so we can assume  $ p\le L$.
First, for any integer $t$ put $\delta(t)=1$ if $\Delta(t) \ne 0 \pmod p$, and let $\delta(t)=0$ otherwise.
We fix some real numbers $K,M\ge 1$
with $KM\le L$.
We now need to estimate the sums $\Sigma_i$, $i =1, \ldots, 4$,
of Lemma~\ref{lem:Vau} with 
$$
\psi(t)=\sym_n\(\psi_p(E(t))\)\delta(t).
$$

By the prime number theorem, and using~\eqref{eq:symn} we can estimate $\Sigma_1$ trivially as
\begin{equation}
\label{eq:S1}
\Sigma_1 \ll n M.
\end{equation}

To estimate $\Sigma_2$, we choose another parameter $R$ and write
\begin{equation}
\label{eq:S2 S21 S22}
\Sigma_2=\Sigma_{2,1} + \Sigma_{2,2},
\end{equation}
where
\begin{equation*}
\Sigma_{2,1} =\sum_{k \le R}\left| \sum_{m\le L/k} \psi(km)\right|, \quad
\Sigma_{2,2} =\sum_{R < k \le KM}\left| \sum_{m\le L/k} \psi(km)\right|.
\end{equation*}

 For $\Sigma_{2,1}$
we estimate the inner sum
by Lemma~\ref{lem:Interv}
 for $k$ not divisible by $p$ and estimate the inner sum trivially for
other $k$. Hence, we obtain
\begin{equation}
\label{eq:S21}
\begin{split}
\Sigma_{2,1}&\ll n   \sum_{\substack{k \le R \\ p\nmid k}}  \(\frac{L}{kp^{1/2}} + p^{1/2} \log p\)
+ n\sum_{\substack{k \le R\\ p\mid k}} \frac{L}{k}  \\
&\ll n   \(\frac{L \log R}{p^{1/2}} + R p^{1/2} \log p \).
\end{split}
\end{equation}

For $\Sigma_{2,2}$ we apply Lemma~\ref{lem:BilinHyperb} (with $\beta_v=1$ and $\alpha_u=\pm 1$ according to the sign of the inner sum) and derive
\begin{equation}
\label{eq:S22} 
\Sigma_{2,2}  \ll n \(Lp^{-1/2}+ L R^{-1/2} +  (KLM)^{1/2}  +(L p)^{1/2} \) (\log L)^2. 
\end{equation}

We now choose $R = L^{2/3} p^{-1/3}$ and substitute the bounds~\eqref{eq:S21} and~\eqref{eq:S22}
into~\eqref{eq:S2 S21 S22}.
 Furthermore, we also note that we can write
$$
\Sigma_3=\sum_{k\le K}\left| \sum_{w_k\le m\le L/k}\psi(k m)\right|,
$$
where $w_k$, $1 \le k \le K$, are chosen to satisfy
$$
\left| \sum_{w_k\le m\le L/k}\psi(k m)\right|= \max_{w\ge
1}\left| \sum_{w\le m\le L/k}\psi(k m)\right|.
$$
Hence, $\Sigma_3$ can be split into two sums as in~\eqref{eq:S2 S21 S22}, and analogues of the
 bounds~\eqref{eq:S21} and~\eqref{eq:S22}
 apply to $\Sigma_3$ (with $K$ in place of $KM$).
 We also note that for  $L\ge p$ we have  $\log p \le \log L $ and  $L^{2/3} p^{1/6}  \le L^{5/6}$. 
Therefore,   we obtain
\begin{equation}
\label{eq:S2 S3}
\Sigma_2+ \Sigma_3 \ll  n  \( L p^{-1/2}  +L^{5/6}
  + (KLM)^{1/2}  +(L p)^{1/2} \) (\log L)^2 .
\end{equation}

In addition, notice that in the area of the summation in $\Sigma_4$ we always have
$k \le L/M$ and $m \le L/K$. 
We also observe that  the classical bound on
the divisor function $\tau(k)$ for $k\le L$, see~\cite[Theorem~13.12]{Apostol}, yields
$$
\sum_{\substack{d\,\mid\,k\\d\le
K}}\mu(d) \ll  \tau(k)   \le L^{c_1/\log\log L},
$$
where $c_1$ is some absolute constant. 
Hence, applying
Lemma~\ref{lem:BilinHyperb} to estimate $\Sigma_4$,  we deduce
\begin{equation}
\label{eq:S4} 
\Sigma_4
 \ll n \( Lp^{-1/2} + LM^{-1/2} + L K^{-1/2} + L^{1/2}p^{1/2} \) L^{c/\log\log L} 
\end{equation} 
for some absolute constant $c>0$. 

Comparing~\eqref{eq:S2 S3} with~\eqref{eq:S4}, we choose
$K=M= L^{1/3}$ to balance these estimates, which also dominate~\eqref{eq:S1}.
Then, substituting the
above estimates into Lemma~\ref{lem:Vau} we obtain
\begin{equation*}
\begin{split}
 \sum_{\substack{1 \le t \le L\\
\Delta(t) \not \equiv 0\!\!\!\!\! \pmod p}} & \Lambda(t)
\sym_n\(\psi_p(E(t))\)  \\
& \ll n  \( L p^{-1/2} + 
L^{5/6}  + L^{1/2} p^{1/2} \) L^{c/\log\log L}.
\end{split}
\end{equation*}  
This completes the proof.
\end{proof}

Via partial summation and using Lemma \ref{lem:Lambda},  we  are now immediately ready to obtain  our main technical ingredient for the proof of Theorem~\ref{thm:Prime2}.

\begin{cor}
\label{cor:Prime2}
If the polynomials $f(Z), g(Z) \in \Z[Z]$
satisfy~\eqref{eq:Nondeg}, then for any prime $p$, and any integer $n \ge 1$,  we have
\begin{equation*}
\begin{split}
& \sum_{\substack{\textrm{prime } \ell \le L\\
\Delta(\ell) \not\equiv 0\!\!\!\!\! \pmod p}}
\sym_n\(\psi_p(E(\ell))\)  \\
& \qquad\qquad \ll n  \( L p^{-1/2} +
L^{5/6}  +(L p)^{1/2} \)   L^{c/\log\log L},
\end{split}
\end{equation*}
where $c>0$ is some absolute constant.
\end{cor}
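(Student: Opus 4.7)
The plan is to deduce the corollary from Lemma~\ref{lem:Lambda} by a standard partial summation argument, paying attention only to the fact that the error term $L^{c/\log\log L}$ absorbs the usual logarithmic losses.

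First I would pass from the von~Mangoldt-weighted sum to a sum over primes weighted by $\log\ell$. The contribution of prime powers $\ell^k$ with $k\ge 2$ to the sum in Lemma~\ref{lem:Lambda} is bounded, using~\eqref{eq:symn}, by
\[
(n+1)\sum_{k\ge 2}\sum_{\ell^k\le L}\log \ell \ll n L^{1/2}\log L,
\]
and since $p\ge 1$ and $\log L \ll L^{c/\log\log L}$ this quantity is dominated by $n(Lp)^{1/2}L^{c/\log\log L}$. Hence, writing
\[
U(L) = \sum_{\substack{\text{prime } \ell \le L\\ \Delta(\ell)\not\equiv 0 \pmod p}} (\log \ell)\,\sym_n\!\bigl(\psi_p(E(\ell))\bigr),
\]
Lemma~\ref{lem:Lambda} yields the same upper bound for $U(L)$ as for the $\Lambda$-weighted sum.

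Next, I would apply Abel summation with weights $1/\log\ell$ to remove the factor $\log\ell$. Writing the sum in the corollary as $S(L)$, this gives
\[
S(L) = \frac{U(L)}{\log L} + \int_2^L \frac{U(t)}{t(\log t)^2}\,{\rm d}t + O(1),
\]
where the $O(1)$ handles the contribution from $\ell = 2$. Inserting the bound $U(t) \ll n h(t)\,t^{c/\log\log t}$ with $h(t) = tp^{-1/2}+t^{5/6}+t^{1/2}p^{1/2}$, the boundary term is already of the required shape (indeed slightly stronger, by a factor $1/\log L$). For the integral, since $t^{c/\log\log t}$ is monotonic and dominated by $L^{c/\log\log L}$ on $[2,L]$, it suffices to estimate $\int_2^L h(t)/(t(\log t)^2)\,{\rm d}t$ term by term, which in each case is controlled by $h(L)/\log L$ (the integrand is log-dominated near $t=L$, and for smaller $t$ the contribution is absorbed into $h(L)$ times a bounded factor).

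There is no genuine obstacle; the only point to be careful about is verifying that the $L^{c/\log\log L}$ factor genuinely absorbs all the auxiliary $\log L$ losses that appear from Abel summation and from the passage from prime powers to primes, which is clear since $L^{c/\log\log L}$ grows faster than any fixed power of $\log L$. Combining the boundary term and the integral and substituting back into the formula for $S(L)$ yields the stated bound.
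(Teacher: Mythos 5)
Your proposal is correct and follows the same route the paper takes: the paper's own proof of Corollary~\ref{cor:Prime2} is the one-line remark ``via partial summation,'' and you have simply supplied the details -- stripping off the prime powers $\ell^k$, $k\ge 2$ (cost $O(nL^{1/2}\log L)$, harmlessly absorbed), and then Abel summing against $1/\log\ell$ so that both the boundary term $U(L)/\log L$ and the integral $\int_2^L U(t)/(t(\log t)^2)\,\mathrm{d}t$ fall within the stated bound after inserting Lemma~\ref{lem:Lambda}. Nothing is missing; the only care required is noting that $t^{c/\log\log t}$ is eventually increasing so it can be pulled out as $L^{c/\log\log L}$, which you correctly flag.
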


\subsection{Distribution of multiplicative orders}
\label{sec:MultOrder}

For integer $\lambda$  and prime $p$ with $\gcd(p,\lambda)=1$, let $\ord_p \lambda$ denote the multiplicative order of $\lambda$ modulo $p$. Then  for any real $\alpha\in (0,2)$, define
$$
S_\alpha(x;\lambda)=\sum_{\substack{p\leq x \\ \gcd(p, \lambda)=1}} \frac{1}{(\ord_p\lambda)^\alpha}.
$$
It follows from
\cite[Corollary~5]{IndTim} that
(taking $r=1$ there) 
\begin{equation}
\label{eq:IndTim}
S_1(x;\lambda)
\ll x^{1/2}\frac{ (\log \log x)^{1+\delta}}{(\log x)^{1+\delta/2}},
\end{equation}
where $\delta$ is given by~\eqref{eq:Erdos}.
 
 Let 
 $$
H_{\cP}(x,y,2y)=\# \{ p\le x~:~\exists \, d\in (y,2y], \, d\mid p-1\}.
$$
We first need the following  consequence of a result of Ford~\cite[Theorem~6 and Corollary~2]{F08}
(we   remark that the extension to $y \in [x^{1/2},  x^{3/4}]$ comes from the symmetry of the 
divisors $d$ and $(p-1)/d$).

\begin{lemma}
\label{lem:Ford}
For any $x\ge 2$ and $3\leq y\leq  x^{3/4}$, we have
$$H_{\cP}(x,y,2y)\ll  \frac{x}{ (\log x)(\log y)^\delta(\log\log y)^{3/2}}.$$
\end{lemma}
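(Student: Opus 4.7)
The plan is to derive the bound as a direct consequence of Ford's \cite[Theorem~6 and Corollary~2]{F08}, which establishes the inequality in the base range $3 \le y \le x^{1/2}$. For the extension to $y \in (x^{1/2}, x^{3/4}]$, I would exploit the symmetry of divisors of $p-1$: if $p \le x$ is prime and $d \in (y, 2y]$ divides $p-1$, then $d' = (p-1)/d$ is also a divisor of $p-1$, and satisfies $d' \in [(p-1)/(2y), (p-1)/y)$. In particular, $d' < x/y \le x^{1/2}$, placing $d'$ squarely in the regime where Ford's estimate applies.

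For the key steps, I would localize the primes in dyadic ranges $p \in (X, 2X]$ with $X = x, x/2, x/4, \ldots$. For each such range, the complementary divisor $d'$ lies in $(X/(2y),\, 2X/y)$, an interval of the form $(y_0, 4y_0)$ with $y_0 = X/(2y)$. This interval is covered by the two dyadic intervals $(y_0, 2y_0]$ and $(2y_0, 4y_0]$, and since $y_0 \le x/(2y) \le x^{1/2}/2$, Ford's estimate applies to each piece, yielding
\[
\#\{p \in (X, 2X] : p-1 \text{ has a divisor in } (y, 2y]\}
\ll \frac{X}{\log X \,(\log y_0)^\delta (\log\log y_0)^{3/2}}.
\]
Summing over $X = x/2^k$ for $k = 0, 1, \ldots$: in the range where $y_0 \ge x^{1/4}$ one has $\log y_0 \asymp \log x$, so the geometric decay of $X = x/2^k$ makes the sum converge to the $k=0$ term, while the tail in $k$ contributes at most $O(x^{3/4}/\log x)$, which is negligible. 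The resulting bound is $\frac{x}{\log x \,(\log(x/y))^\delta (\log\log(x/y))^{3/2}}$, and this matches the claimed form because both $\log y$ and $\log(x/y)$ are of order $\log x$ throughout $y \in (x^{1/2}, x^{3/4}]$.

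The main obstacle will be the dyadic bookkeeping. A naive bound that sums $H_{\cP}(x, 2^{j-1}, 2^j)$ over all dyadic $j \le \log_2(x/y)$, without localizing $p$ first, produces an additional factor of $(\log(x/y))^{1-\delta} \asymp (\log x)^{1-\delta}$, weakening the bound by a factor of $\log x$. The remedy is precisely the localization of $p$ above, which confines $d'$ to an interval of multiplicative width $4$ and hence to only $O(1)$ dyadic intervals per range. The marginal cases with $y_0 < 3$ (corresponding to very small $X$, comparable to $y$) are handled trivially by the prime-counting function and contribute negligibly compared to the target.
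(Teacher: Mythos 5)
Your proposal is correct and is precisely the argument the paper gestures at in its parenthetical remark that the range $y \in [x^{1/2}, x^{3/4}]$ is reached via the symmetry of the divisors $d$ and $(p-1)/d$. The key technical point you supply---localizing $p$ dyadically so that the complementary divisor $(p-1)/d$ is confined to $O(1)$ dyadic intervals per range of $p$, rather than naively summing Ford's bound over all $\asymp \log x$ dyadic intervals that $(p-1)/d$ could fall into (which, as you note, would cost a factor $(\log x)^{1-\delta}$)---is exactly what makes the symmetry argument quantitatively sharp.
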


\begin{lemma}
\label{lem:MultOrd}
For any  integer $\lambda$ with $|\lambda|>1$, any real $\alpha\in (0,2)$ and $x \ge 3$, we have
$$
S_\alpha(x;\lambda)\ll_\alpha \frac{x^{1-\alpha/2} \log |\lambda|}{(\log x)^{1+(2-\alpha)\delta/2}(\log\log x)^{3(2-\alpha)/4}}.
$$
\end{lemma}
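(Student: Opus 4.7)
The plan is to split the sum $S_\alpha(x;\lambda)=\sum_d N(d)/d^\alpha$, where $N(d) = \#\{p\le x : \gcd(p,\lambda)=1,\ \ord_p\lambda = d\}$, into three ranges depending on the size of $d$: small ($d \le T$), medium ($T < d \le x^{3/4}$), and large ($x^{3/4} < d \le x$), with a parameter $T$ of size roughly $x^{1/2}$ up to polylogarithmic adjustments.

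For the small range I would bound $N(d)$ via the cyclotomic polynomial $\Phi_d$: any prime $p$ counted by $N(d)$ with $p\nmid d$ divides $\Phi_d(\lambda)$ and satisfies $p\equiv 1\pmod d$, hence $p\ge d+1$. Combined with $|\Phi_d(\lambda)|\le (|\lambda|+1)^{\varphi(d)}$, this gives $N(d) \ll \varphi(d)\log|\lambda|/\log d + \log d$ for $d\ge 3$, while $N(1),N(2) \ll \log|\lambda|$. Summing with $\varphi(d)\le d$ yields
$$
\sum_{d \le T} \frac{N(d)}{d^\alpha} \ll_\alpha \frac{\log|\lambda|\,T^{2-\alpha}}{\log T}.
$$
The gain $1/\log d$ coming from $p\equiv 1\pmod d$ is essential; without it the bound would be too weak by a factor of $\log x$.

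For the medium range, since $\ord_p\lambda \mid p-1$, a dyadic decomposition into windows $(T2^j, T2^{j+1}]$ combined with Lemma \ref{lem:Ford} gives
$$
\sum_{T<d\le x^{3/4}} \frac{N(d)}{d^\alpha} \ll \sum_{j\ge 0,\, T2^{j+1}\le x^{3/4}} \frac{(T2^j)^{-\alpha}\,x}{(\log x)\,(\log(T2^j))^{\delta}\,(\log\log(T2^j))^{3/2}}.
$$
The geometric series in $j$ converges (as $\alpha>0$) and is dominated by the $j=0$ term; provided $T\ge x^c$ for some fixed $c>0$ (so that $\log T\asymp\log x$ and $\log\log T\asymp\log\log x$), the medium contribution is $\ll_\alpha xT^{-\alpha}/((\log x)^{1+\delta}(\log\log x)^{3/2})$. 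The large range is handled trivially by $\sum_{d>x^{3/4}} N(d) \le \pi(x)$, giving at most $x^{1-3\alpha/4}/\log x$, which is absorbed by the target for large $x$ thanks to the polynomial saving $x^{\alpha/4}$.

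Balancing the small and medium estimates yields $T^2 \asymp x\log T/(\log|\lambda|\,(\log x)^{1+\delta}(\log\log x)^{3/2})$, i.e., $T\asymp x^{1/2}/((\log|\lambda|)^{1/2}(\log x)^{\delta/2}(\log\log x)^{3/4})$ after using $\log T\asymp\log x$. Substituting back into either bound produces
$$
\ll_\alpha \frac{(\log|\lambda|)^{\alpha/2}\,x^{1-\alpha/2}}{(\log x)^{1+(2-\alpha)\delta/2}(\log\log x)^{3(2-\alpha)/4}},
$$
and since $(\log|\lambda|)^{\alpha/2}\ll \log|\lambda|$ for $|\lambda|\ge 2$ and $\alpha<2$, the claimed bound follows. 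The main obstacle is the small-$d$ range: the refined estimate $\omega(\Phi_d(\lambda))\ll \varphi(d)\log|\lambda|/\log d$ obtained from the congruence $p\equiv 1\pmod d$ cannot be replaced by the cruder $\omega(\lambda^d-1)\ll d\log|\lambda|$ if one is to match the exponent $1+(2-\alpha)\delta/2$ on $\log x$ in the denominator of the target.
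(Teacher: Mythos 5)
Your proof is correct, and the overall architecture matches the paper's: a three-way split by the size of $\ord_p\lambda$, Ford's theorem (Lemma~\ref{lem:Ford}) via dyadic decomposition in the middle range $(T,x^{3/4}]$, the trivial bound $\pi(x)/z^\alpha$ in the large range with $z=x^{3/4}$, and then balancing around a threshold $\asymp x^{1/2}$ up to polylogarithmic corrections. The only point of departure is the small range. The paper simply writes $N(d)\le\omega(\lambda^d-1)$ and invokes the classical $\omega(n)\ll\log n/\log\log n$, giving $N(d)\ll d\log|\lambda|/\log(d\log|\lambda|)$; you instead pass to $\Phi_d(\lambda)$ and use $p\equiv 1\pmod d$ (hence $p>d$) to get $N(d)\ll\varphi(d)\log|\lambda|/\log d$. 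Both yield $\sum_{d\le T}N(d)/d^\alpha\ll_\alpha T^{2-\alpha}\log|\lambda|/\log T$, so the cyclotomic refinement (replacing $d$ by $\varphi(d)$, and the $\log\log$ by $\log(d+1)$) buys nothing here. Your closing remark that the $1/\log d$ saving ``cannot be replaced by the cruder $\omega(\lambda^d-1)\ll d\log|\lambda|$'' is aimed at a strawman: nobody would drop the $\log\log$ factor from the elementary $\omega$-bound, and keeping it gives the same $1/\log d$ saving you attribute to the congruence $p\equiv 1\pmod d$. Your choice of $T$ carries an extra $(\log|\lambda|)^{-1/2}$ that the paper omits (it chooses $y$ independent of $\lambda$); this is harmless since the dominant term already carries a factor $\log|\lambda|$, and you correctly absorb $(\log|\lambda|)^{\alpha/2}$ into $\log|\lambda|$ at the end.
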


\begin{proof}
Let $3\leq y < z \leq  x^{3/4}$. We divide $S_\alpha(x;\lambda)$ into three parts $S_1$, $S_2$ and $S_3$   with $S_1$ corresponding to $\ord_p \lambda \leq y $, $S_2$ to $\ord_p \lambda \in (y,z]$ and
$S_3$ to $\ord_p \lambda >z$. As usual, let $\omega(n)$ be the number of distinct prime factors of integer $n\ne 0$.
Using the bound $\omega(n)\ll (\log n)/(\log\log n)$, we obtain 
$$
S_1 \leq \sum_{m\leq y} \frac{\omega(\lambda^m-1)}{m^\alpha} \ll_\alpha \frac{y^{2-\alpha}}{\log y} \log |\lambda|.
$$
Note that ${\rm ord}_p(\lambda) \mid p-1$. Then applying Lemma~\ref{lem:Ford}, we get
\begin{align*}
S_2  & \le \sum_{\substack{k\geq 0 \\ 2^{k}y\leq z}}\frac{1}{2^{k\alpha} y^\alpha}
H_{\cP}(x,2^{k}y,2^{k+1}y) \\
& \ll_\alpha \frac{x}{(\log x)y^\alpha(\log y)^\delta(\log\log y)^{3/2}}.
\end{align*}
In addition, we trivially have
$$S_3\leq \frac{\pi(x)}{  z^\alpha}.$$

Then taking $z=x^{3/4}$ and $y= \sqrt{x} (\log x)^{-\delta/2}(\log\log x)^{-3/4}$, we have
$$S_1+S_2+S_3\ll_\alpha \frac{x^{1-\alpha/2}\log |\lambda|}{(\log x)^{1+(2-\alpha)\delta/2}(\log\log x)^{3(2-\alpha)/4}},
$$
which completes the proof.
\end{proof}

\section{Proofs of Main Results}

%*************************************************
\subsection{Proof of Theorem~\ref{thm:Subgroup}}
%*************************************************

Let $\cH = \{w\in \cG: \Delta(w) \ne 0 \}$.
Applying Lemma~\ref{lem:Subgr}, we immediately have
$$
 \left| \sum_{w \in \cH} \sym_n(\psi_p(E(w))) \right| \ll n p^{1/2},
$$
which, together with Corollary~\ref{cor:ST Discrep}, yields
$$
N_p(\alpha,\beta;\cG) = \mu_{\tt ST}(\alpha,\beta) \# \cH
+ O\(\sqrt{p^{1/2} \#\cH}\).
$$
We complete the proof by noticing that
$$
 \# \cG - \deg \Delta \le \# \cH \le \# \cG.
$$

%*************************************************
\subsection{Proof of Theorem~\ref{thm:ProductSet}}
%*************************************************

Let
$$
\cH = \{ (u,v): u\in \cU, v\in \cV, \Delta(uv) \ne 0 \}.
$$
Using Lemma~\ref{lem:Bilin} (with $\alpha_u =\beta_v = 1$ and $U=V = p-1$), we obtain
$$
 \left| \sum_{(u,v) \in \cH} \sym_n(\psi_p(E(uv))) \right| \ll n \sqrt{\# \cU \# \cV p},
$$
which, combining with Corollary~\ref{cor:ST Discrep}, gives
$$
N_p(\alpha,\beta;\cU, \cV) = \mu_{\tt ST}(\alpha,\beta) \# \cH
+ O\((\# \cU \# \cV p)^{1/4} \sqrt{\#\cH}\).
$$
We conclude the proof by noticing that
$$
 \# \cU \# \cV - \min\{ \#\cU, \#\cV\}\deg \Delta \le \# \cH \le \# \cU \# \cV.
$$

%*************************************************
\subsection{Proof of Theorems~\ref{thm:Prime1} and ~\ref{thm:Prime2}}
\label{sec:proof prime}
%*************************************************

Let $\cL$ be the set of primes $\ell \le L$ such that $\Delta(\ell) \not\equiv 0 \pmod p$.

Without loss of generality we can assume that $\varepsilon \le 1/4$.
We  set 
$$
\eta = 1/48 - \varepsilon/24.
$$
Applying Lemma~\ref{lem:Prime1}, we derive
$$
 \left| \sum_{\ell \in \cL} \sym_n(\psi_p(E(\ell))) \right|
 \ll  n^A  \pi(L) \( 1  + p/L\)^{1/12} p^{-\eta}.
$$
Now, combining this with Corollary~\ref{cor:ST Discrep},  and using that $\# \cL \le \pi(L)$ and 
$p/L < p^{1/4-\varepsilon}$,  we obtain
\begin{equation*}
\begin{split}
Q_p(\alpha,\beta;L) &-  \mu_{\tt ST}(\alpha,\beta) \# \cL\\
&  \ll \pi(L)^{A/(A+1)}\(\pi(L) \( 1  + p/L\)^{1/12} p^{-\eta}\)^{1/(A+1)} \\
&  = \pi(L)\( \( 1  + p/L\)^{1/12} p^{-\eta}\)^{1/(A+1)} \\
&  \ll \pi(L)\( \( 1  + p^{1/4-\varepsilon}\)^{1/12} p^{-\eta}\)^{1/(A+1)} \\
& \ll \pi(L) p^{(1/48 -\varepsilon/12 -\eta)/(A+1)}  =  \pi(L) p^{-\rho},
\end{split}
\end{equation*}
where  
$$
\rho=\frac{\varepsilon}{24(A+1)} > 0
$$
(provided that $0 < \varepsilon \le 1/4$).

This  completes the proof of Theorems~\ref{thm:Prime1} by noticing that 
$$
\pi(L) -  \frac{c_1\pi(L)}{p-1}\deg \Delta \le \# \cL \le \pi(L). 
$$
where $c_1>0$ is some absolute constant according to the Dirichlet theorem on 
primes in arithmetic progressions. 

The proof of Theorem~\ref{thm:Prime2} is identical 
to that of  Theorem~\ref{thm:Prime1}, 
except that 
we use   Corollary~\ref{cor:Prime2} instead of Lemma~\ref{lem:Prime1}.

%*************************************************
\subsection{Proof of Theorem~\ref{thm:S-T Setprod}}
%*************************************************
We consider slightly more general 
 settings, when $\cU, \cV \subseteq [1,T]$
for some positive integer $T\le x$, because some of the intermediate bounds
can be of further use.

For any prime $p$, let
$$
\cD_p = \{ (u,v)~:~u\in \cU, \, v\in \cV, \, uv \equiv 0 \pmod p \},
$$
and 
$$
\cH_p = \{ (u,v)~:~u\in \cU,\,  v\in \cV, \, \Delta(uv) \not\equiv 0 \pmod p \}.
$$
We denote by $M_p(\alpha,\beta;\cU,\cV)$ the number of pairs $(u,v)\in \cH_p$ such that  $\psi_p(E(uv)) \in [\alpha,\beta]$. Without loss of generality, we assume that
$$
\# \cU \ge \# \cV.
$$ 
It follows from Lemma~\ref{lem:Bilin} (with $\alpha_u =\beta_v = 1$ and $U=V = T$)
and~\eqref{eq:symn} that
\begin{align*}
 & \left| \sum_{(u,v) \in \cH_p} \sym_n(\psi_p(E(uv))) \right| \\
 & \qquad \leq \left| \sum_{\substack{(u,v) \in \cH_p\\ \gcd(uv,p)=1}} \sym_n(\psi_p(E(uv))) \right| + \left| \sum_{\substack{(u,v) \in \cH_p\\ \gcd(uv,p)\ne 1}} \sym_n(\psi_p(E(uv))) \right| \\
 &\qquad \ll n (T/p+1) p^{1/2} (\# \cU \# \cV )^{1/2} + n (T/p )\# \cU. 
\end{align*}  
So, using Corollary~\ref{cor:ST Discrep}, we have
\begin{equation*}
\begin{split}
M_p(&\alpha,\beta;\cU, \cV)  -  \mu_{\tt ST}( \alpha,\beta)  \# \cH_p \\ &\ll (\#\cH_p)^{1/2}\Big( (T/p+1)^{1/2}  p^{1/4}(\# \cU \# \cV )^{1/4} +(T/p )^{1/2}(\# \cU)^{1/2}  \Big).
\end{split}
\end{equation*}
Noticing 
$$
 \# \cU \# \cV - \#\cD_p - \#\cV(T/p + 1)\deg \Delta  \le \# \cH_p \le \# \cU \# \cV,
$$
we obtain  
\begin{equation}
\begin{split}
\label{eq:M_p 1}
& M_p(\alpha,\beta;\cU, \cV)  - \mu_{\tt ST}(\alpha,\beta)\# \cU \# \cV \\
& \qquad \qquad \ll \#\cD_p + Tp^{-1}\# \cV + (T^{1/2}p^{-1/4}+p^{1/4}) (\# \cU \# \cV )^{3/4}\\
&\qquad\qquad  \qquad  \qquad \qquad \qquad \qquad  \qquad+ T^{1/2} p^{-1/2}  \# \cU (\# \cV)^{1/2} .
\end{split}
\end{equation}

Besides, it is easy to see that
\begin{align*}
\sum_{\substack{u \in \cU, v\in \cV \\ \Delta(uv) \ne 0}}
\pi_{E(uv)}(\alpha,\beta; x)
& = \sum_{\substack{u \in \cU, v\in \cV \\ \Delta(uv) \ne 0}}
\sum_{\substack{p\le x \\ \Delta(uv) \not\equiv 0  \pmod p \\ \psi_p(E(uv)) \in [\alpha,\beta]}} 1 \\
& = \sum_{p\le x} \sum_{\substack{u \in \cU, v\in \cV  \\ \Delta(uv) \not\equiv 0  \pmod p \\ \psi_p(E(uv)) \in [\alpha,\beta]}} 1
 = \sum_{p\le x} M_p(\alpha,\beta;\cU, \cV).
\end{align*}
Moreover, we estimate the sum of $\#\cD_p$ as follows: 
\begin{align*}
\sum_{p \le x} \#\cD_p \le \#\cU \sum_{v\in \cV} \omega(v) +
\#\cV \sum_{u\in \cU} \omega(u) \ll \#\cU\#\cV \log x,
\end{align*}
where, as before,  $\omega(w)$ is the number of distinct prime factors of integer $w \ne 0$.

Thus, applying~\eqref{eq:M_p 1} we deduce that
\begin{equation}
\begin{split}
\label{eq:M_p 2}
& \sum_{\substack{u \in \cU, v\in \cV \\ \Delta(uv) \ne 0}}
\pi_{E(uv)}(\alpha,\beta; x) -   \mu_{\tt ST}(\alpha,\beta)\pi(x)\# \cU \# \cV \\
& \qquad\ll  \sum_{p\le x}\Bigl( \#\cD_p + Tp^{-1}\# \cV + (T^{1/2}p^{-1/4}+p^{1/4}) (\# \cU \# \cV )^{3/4}\\
&\qquad\qquad\qquad\qquad\qquad\qquad\qquad\qquad+T^{1/2} p^{-1/2}\# \cU (\# \cV)^{1/2}  \Bigr) \\
&  \qquad  \ll \#\cU\#\cV \log x + T\# \cV \log x+ \pi(x) T^{1/2} x^{-1/2}\# \cU (\# \cV)^{1/2} \\
&  \qquad\qquad\qquad\qquad\qquad+ \pi(x)\(T^{1/2}x^{-1/4}+x^{1/4}\) \(\# \cU \# \cV \)^{3/4} .
\end{split}
\end{equation}

Now, to finish the proof, we substitute $T =x$ into~\eqref{eq:M_p 2} and notice that 
$T\# \cV \log x\leq x\(\# \cU \# \cV \)^{1/2}\log x \ll \pi(x) x^{1/4}  \(\# \cU \# \cV \)^{3/4} $.

%*************************************************
\subsection{Proof of Theorem~\ref{thm:S-T GeomProgr}}
%*************************************************
\label{sec:S-T GeomProgr}

For any prime $p$, let
$$
\cH_p = \{ t: 1\le t \le T, \, \Delta(\lambda^t) \not\equiv 0 \pmod p \}.
$$
We denote by $M_p(\alpha,\beta;T)$ the number of integers $t\in \cH_p$ such that  $\psi_p(E(\lambda^t)) \in [\alpha,\beta]$.
If $p\nmid \lambda$, we write $T= k_p \ord_p \lambda + s_p$ with $0\le s_p < \ord_p \lambda$.
Using Lemma~\ref{lem:Subgr} and Lemma~\ref{lem:IncompSubgr}, for $p\nmid \lambda$ we obtain
$$
 \left| \sum_{t \in \cH_p} \sym_n(\psi_p(E(\lambda^t))) \right| \ll n\(k_p \sqrt{p} +  \sqrt{p}\log p\).
$$
Hence, using Corollary~\ref{cor:ST Discrep}, we have
\begin{equation*}
M_p(\alpha,\beta;T) = \mu_{\tt ST}( \alpha,\beta)  \# \cH_p
 + O\(\sqrt{(k_p\sqrt{p} + \sqrt{p}\log p)\#\cH_p}\).
\end{equation*}
Noticing
$$
 T- (k_p +1)\deg \Delta \le \# \cH_p \le  T,
$$
for $p\nmid \lambda$ we have
\begin{equation}
\label{eq:M_pt}
M_p(\alpha,\beta;T) - \mu_{\tt ST}(\alpha,\beta)T
\ll \(k_p^{1/2}p^{1/4}+p^{1/4}(\log p)^{1/2}\) T^{1/2}.
\end{equation}
In addition, it is easy to see that
\begin{align*}
\sum_{\substack{1\le t \le T \\ \Delta(\lambda^t) \ne 0}}
\pi_{\lambda}(\alpha,\beta; t,x)
& = \sum_{\substack{1\le t \le T \\ \Delta(\lambda^t) \ne 0}}
\sum_{\substack{p\le x \\ \Delta(\lambda^t) \not\equiv 0  \pmod p \\ \psi_p(E(\lambda^t)) \in [\alpha,\beta]}} 1 \\
& = \sum_{p\le x}
\sum_{\substack{1\le t \le T  \\ \Delta(\lambda^t) \not\equiv 0 \pmod p \\ \psi_p(E(\lambda^t)) \in [\alpha,\beta]}} 1
= \sum_{p\le x } M_p(\alpha,\beta;T).
\end{align*}

For $p\mid \lambda$, we use the trivial bound $M_p(\alpha,\beta;T) \le T$.
 Thus, recalling~\eqref{eq:M_pt} and using $k_p \le T/\ord_p\lambda$,
we deduce that
\begin{align*}
&  \sum_{\substack{1\le t \le T \\ \Delta(\lambda^t) \ne 0}}
\pi_{\lambda}(\alpha,\beta; t,x)  -  \mu_{\tt ST}(\alpha,\beta)\pi(x)T \\
& \quad \ll T\log |\lambda| + \sum_{\substack{p\le x \\ p \nmid \lambda}} \(k_p^{1/2}p^{1/4}+p^{1/4}(\log p)^{1/2} \) T^{1/2} \\
& \quad \ll_{\lambda}  Tx^{1/4}S_{1/2}(x; \lambda) +   T^{1/2}x^{1/4}(\log x)^{1/2}\pi(x) \\
& \quad   \ll_{\lambda}   \frac{Tx}{(\log x)^{1+3\delta/4}(\log \log x)^{9/8}}  + T^{1/2}x^{1/4}(\log x)^{1/2}\pi(x),
\end{align*}
where the last inequality follows from Lemma~\ref{lem:MultOrd}. 

Recalling the condition  $T \ge   x^{1/2}
(\log x)^{1+3\delta/2}(\log \log x)^{ 9/4}$, we complete the proof.

%*************************************************
\subsection{Proof of Theorems~\ref{thm:S-T Primes 1} and \ref{thm:S-T Primes 2}}
%*************************************************

The results follow immediately from Theorem~\ref{thm:Prime1} and Theorem~\ref{thm:Prime2}
after summation over $p$, respectively. %\qed

\section{Possible Extensions}

Here we point out several further results which can be obtained within our methods. For example, we can estimate the sums
\begin{equation}
\begin{split}
\label{eq:Mob func}
 & \sum_{\substack{1 \le t \le L\\
\Delta(t) \not \equiv 0 \pmod p}} | \mu(t) |
\sym_n\(\psi_p(E(t))\), \\
& \sum_{\substack{1 \le t \le L\\
\Delta(t) \not \equiv 0 \pmod p}} \mu(t)
\sym_n\(\psi_p(E(t))\),
\end{split}
\end{equation}
with the M\"obius function $\mu$. 
Note that the first sum in~\eqref{eq:Mob func} is the sum over
squarefree numbers and can be reduced to the sums of  Lemma~\ref{lem:Interv}
via the standard inclusion-exclusion principle. It correponds to a form  of the Sato--Tate conjecture on average for  curves of the family~\eqref{eq:Family AB}  with  squarefree values of the 
parameter $t$.  
For the second sum in~\eqref{eq:Mob func}
we can use the following analogue of the Vaughan identity
given in Lemma~\ref{lem:Vau}: for any complex-valued function $\psi(t)$ and any real numbers $K,M\ge 1$
with $KM\le L$ and $L\ge 2$, we have
$$
\sum_{t=1}^L\mu(t)\psi(t)\ll
\Omega_1+\Omega_2+\Omega_3+\Omega_4,
$$
where
\begin{align*}
\Omega_1 & =  \left|\sum_{t\le \max \{K,M\}}\mu(t)\psi(t)\right|,\\
\Omega_2 & =  \sum_{k \le KM}\tau (k)\left|\sum_{m\le L/k}\psi(km)\right|,\\
\Omega_3 & =  0,\\
\Omega_4 & = \left|
\sum_{M<m \le L/K} \mu(m)  \sum_{K<k \le L/m}
 \Big| \sum_{d \mid k ,\ d \leq  K }\mu(d) \Big| \psi(km)\right|;
\end{align*}
see the proof of~\cite[Theorem~5.1]{BCFS}.\footnote{We take the opportunity to note that in the proof of \cite[Theorem~5.1]{BCFS}, there are some absolute value symbols that should be brackets; this is 
inconsequential for the argument.} 
So we can
now  proceed as in the proof
of  Lemma~\ref{lem:Lambda}.

\section*{Acknowledgements}

The authors are grateful to Ping Xi for interesting discussions. 
The authors also would like to thank the referee for valuable comments.

The research of the first author was supported by an IUF junior, the second and third authors were supported by the Australian
Research Council Grant DP130100237, and the research of the fourth author
was supported by the Simons Foundation Grant \#234591.

\end{document}